\theoremstyle{plain}
\newtheorem{thm}{Theorem}[section]
\newtheorem{cor}[thm]{Corollary}
\newtheorem{lem}[thm]{Lemma}
\newtheorem{rem}[thm]{Remark}
\newtheorem{prop}[thm]{Proposition}
\numberwithin{equation}{section}
\newcommand{\myIm}{\mathop{\rm Im}}
\newcommand{\myRe}{\mathop{\rm Re}}
\newcommand{\dom}{\mathop{\rm dom}}
\newcommand{\norm}[1]{\left\Vert#1\right\Vert}
\newcommand\wt{\widetilde}
\newcommand\bR{{\mathbb R}}
\newcommand\bC{{\mathbb C}}
\newcommand\eps{\varepsilon}
\newcommand{\xim}{-\xi_\eps}
\newcommand{\xip}{\xi_\eps}
\newcommand{\xipm}{\pm\xi_\eps}
\newcommand{\ximp}{\mp\xi_\eps}
\newcommand{\xp}{x_\eps}
\newcommand\rmd{\mathrm{d}}
\newcommand\rme{\mathrm{e}}
\newcommand\rmi{\mathrm{i}}
\newcommand{\dint}{\displaystyle\int}
\begin{document}

\title[Singularly scaled Schr\"{o}dinger operators]{Norm resolvent convergence of singularly scaled Schr\"{o}dinger operators and  $\delta'$-potentials}

\author{Yu. D.~Golovaty \and R. O.~Hryniv}
\email{yu\_\,holovaty@franko.lviv.ua, rhryniv@iapmm.lviv.ua}

\address[YuG]{Department of Mechanics and Mathematics,
  Ivan Franko National University of Lviv, 1 Universytetska str., 79000 Lviv, Ukraine }

\address[RH]{Institute for Applied Problems of Mechanics and Mathematics, 3b Naukova str.,
79060 Lviv, Ukraine \and Institute of Mathematics, the University of Rzesz\'{o}w, 16\,A Rejtana al., 35-959 Rzesz\'{o}w, Poland}


\begin{abstract}
For a real-valued function~$V$ from the Faddeev--Mar\-chen\-ko class, 
we prove the norm resolvent convergence, as $\eps\to0$, of a family $S_\eps$ of one-dimensional Schr\"odinger
operators on the line of the form
\[
    S_\eps:= -\frac{\rmd^2}{\rmd x^2} + \frac1{\eps^2}\,V\Bigl(\frac{x}{\eps}\Bigr).
\]
Under certain conditions the functions $\eps^{-2}V(x/\eps)$ converge in the sense of  distributions as $\eps\to 0$ to~$\delta'(x)$, and then the limit $S_0$ of~$S_\eps$ might be considered as a ``physically motivated'' interpretation of the one-dimensional Schr\"odinger operator with potential $\delta'$.
\end{abstract}
\maketitle



\section{Introduction}\label{sec:intr}

The aim of this paper is to study convergence as $\eps\to0$ of the family~$S_\eps$ of Schr\"odinger operators on the line given by
\begin{equation}\label{eq:intr.Seps}
    S_\eps := -\frac{\rmd^2}{\rmd x^2} + \frac{1}{\eps^2}\,V\Bigl( \frac{x}{\eps}\Bigr),
\end{equation}
for the largest possible class of potentials~$V$, namely, for real-valued~$V$ in the Faddeev--Marchenko class $L_1\bigl(\bR;(1+|x|)\,\rmd x\bigr)$. The asymptotic behaviour of~$S_\eps$ and their many-dimensional analogues has been discussed in both the mathe\-matical and physical literature in connection with the small-energy scatte\-ring~\cite{AlbeverioGesztesyHoeghKrohn:1982,JensenKato:1979} and singular perturbations~\cite{AlbeverioHoeghKrohn:1981,Seba:1985,SebRMP} since 1980-ies. Recently, the Schr\"odinger operator family~$S_\eps$ has been enjoying a renewed interest motivated by the question on approximation of thin quantum waveguides by quantum graphs~\cite{AlbeverioCacciapuotiFinco:2007,CacciapuotiExner:2007, CacciapuotiFinco:2007,Cacciapuoti:2011,DellAntonioPanati:2004, Gianesello:2011}.

In three dimensions, the corresponding family of Hamiltonians
\[
    H_\eps:=-\Delta + \frac{\lambda(\eps)}{\eps^2}\,V\Bigl(\frac{x}{\eps}\Bigr)
\]
was first studied by Albeverio and H{\o}egh-Krohn~\cite{AlbeverioHoeghKrohn:1981}. Here $\lambda(\eps)$ is a smooth function with $\lambda(0)=1$ and $\lambda'(0)\ne0$ and $V$ is short-range and of Rollnik class~\cite[Ch.~X.2]{Reed-Simon-I}. It was shown that the family $H_\eps$ converges as $\eps\to0$ in the strong resolvent sense to~$H_0$ that is either the free Hamiltonian $-\Delta$ or its perturbation by a delta-function
(see the pioneering work by Berezin and Faddeev \cite{BerezinFadeev}) depending on whether or not there is a zero-energy resonance for $H=-\Delta+V$. We recall that $H$ is said to possess a zero-energy resonance if the equation $H \psi =0$ has a distributional solution that is bounded but does not belong to $L_2(\bR^3)$. Analogous results were established in~\cite{AlbeverioHoeghKrohn:1981} also for $V$ containing finitely or infinitely many  summands scaled about different centres.

In~\cite{AlbeverioGesztesyHoeghKrohn:1982}, the authors discussed the low-energy scattering in two particle non-relativistic quantum mechanics. They used the results of~\cite{AlbeverioHoeghKrohn:1981} and the connection between the low-energy behaviour of the scattering amplitude and scattering matrix for the corresponding Hamiltonian $H=-\Delta + V$ in~$L_2(\bR^3)$ and for the scaled Hamiltonians
\(
    H_\eps=-\Delta + \eps^{-2} V(x/\eps)
\)
as $\eps \to 0$ to study in detail possible resonant and non-resonant cases. Similar problem for Hamiltonians including the Coulomb-type interaction was treated in~\cite{AlbeverioGesztesyHoeghKrohnStreit:1983}.

Interestingly enough, the low-energy scattering theory for Schr\"odinger operators in dimensions one and two is more complicated than in dimension three. This is connected with, respectively, the square root and logarithmic singularities
the Green function of the free Hamiltonian then possesses. The low-energy scattering for the one-dimensional Schr\"odinger operator~$S_1$ and its connection to the behaviour of~$S_\eps$ as $\eps\to0$ was thoroughly investigated by Boll\'e, Gesztesy, Klaus, and Wilk, both for the non-resonant~\cite{BolleGesztesyWilk:1985} and resonant \cite{BolleGesztesyKlaus:1987} cases respectively; in dimension two, the low-energy asymptotics was discussed in~\cite{BolleGesztesyDanneels:1988}. Continuity of the scattering matrix at zero energy for one-dimensional Schr\"odinger operators with Faddeev--Marchenko potentials in the re\-so\-nant case was independently established by Guseinov in~\cite{Guseinov:1985} and by Klaus in~\cite{Klaus:1988}.

Another reason to study the family~$S_\eps$ comes from the quantum graph theory.
One of the fundamental questions of this theory consists of justifying the possibility of approximating dynamics of a quantum particle confined to real-world mesoscopic waveguides of small width~$\eps$ by its dynamics on the idealized one-dimensional ``mani\-folds'' obtained in the limit as $\eps$ vanishes.

For instance, for bent waveguides $\Omega_\eps$ in dimension~$2$ that coincide with two straight strips outside a compact ``vertex'' region this question was studied in~\cite{AlbeverioCacciapuotiFinco:2007}. The authors showed that the problem reduces to  establishing the norm resolvent convergence of the operator family~$S_\eps$ of~\eqref{eq:intr.Seps} as $\eps\to0$. Assuming that $V$ has nonzero mean and decays exponentially, i.e., that
\begin{equation}\label{eq:intr.Vzeromean}
    \int_{\bR} V(x)\,\rmd x \ne 0 \quad \mbox{and}\quad
        e^{a|\,\cdot\,|}V \in L_1(\bR) \quad \text{for some}\quad a>0,
\end{equation}
norm resolvent convergence of~$S_\eps$ as $\eps\to0$ was proved and the limiting operator $S_0$ was identified. Depending on whether or not the potential $V$ is resonant, the operator~$S_0$ is either the direct sum $S_-\oplus S_+$ of two free half-line Schr\"odinger operators given by $S_\pm:=-\tfrac{d^2}{dx^2}$ on $\bR_\pm$ and subject to the Dirichlet condition $y(0)=0$ at the origin, or a singular perturbation $S(\theta)$ of the free Schr\"odinger operator~$S$ defined by
\begin{equation}\label{eq:intr.Stheta}
    S(\theta)y=-y''
\end{equation}
on functions $y$ in~$W_2^2(\bR\setminus\{0\})$ obeying the nontrivial interface conditions at the origin,
\begin{equation}\label{eq:intr.Stheta-dom}
    y(0+) = \theta y(0-),
        \quad \theta y'(0+) = y'(0-).
\end{equation}
The number~$\theta$ depends on the geometric properties (in particular, on the curvature) of the bent waveguide. The convergence results for bent waveguides were recently re-examined from a different viewpoint in the paper~\cite{Cacciapuoti:2011}.

In~\cite{CacciapuotiExner:2007}, the analysis of~\cite{AlbeverioCacciapuotiFinco:2007} was further extended to the case of waveguides~$\Omega_\eps$ with non-trivial scaling properties around the vertex region; namely, the authors demonstrated that in the resonant case the class of limiting Hamiltonians is wider and includes, in particular, perturbations of the free Schr\"odinger operator~$S$ by the delta-functions. Results analogous to those of~\cite{AlbeverioCacciapuotiFinco:2007} and \cite{CacciapuotiExner:2007} were also established in~\cite{CacciapuotiFinco:2007} in the case of Robin conditions on the boundary of~$\Omega_\eps$.

The effect of twisting in 3D quantum wave-guides with shrinking non-circular cross-section was investigated in \cite{Gianesello:2011}. The problem was reduced to the study of convergence of singularly scaled Schr\"{o}dinger-type operators $\mathcal{H}_\eps$ in an unbounded tube with potentials containing the singular term $\eps^{-2} \vartheta(x_1/\eps)$, where $x_1$ is a longitudinal coordinate in the tube.
The function $\vartheta$ of compact support describes the ``fast'' twisting of the cross-section over a small $x_1$-interval shrinking to a point $x_0$. It was proved that the family $\mathcal{H}_\eps$ converges as $\eps\to 0$ in the (suitably understood) norm resolvent sense to the operator $\mathcal{H}_0$, which is a one-dimensional Schr\"{o}dinger operator subject to
the Dirichlet boundary condition at $x_0$.
Earlier, the operators $\mathcal{H}_\eps$ appeared in \cite{KrejcirikZuazua:2010}
in connection with the problem of large-time behaviour of solutions to the heat equation in a twisted tube, and their strong resolvent convergence was established therein.

The limiting behaviour of the family $S_\eps$ is interesting for yet another reason. Namely, in the case where $V$ has a zero mean and its first moment is~$-1$, i.e., where
\[
    \int_{\bR} V(x)\,\rmd x =0, \qquad \int_{\bR} xV(x)\,\rmd x =-1,
\]
the family of scaled potentials $V_\eps(x):= \eps^{-2}V(\eps^{-1}x)$ converges as $\eps\to0$ to the derivative $\delta'$ of the Dirac delta-function. Therefore, the limit~$S_0$ of $S_\eps$ can be regarded as a physically motivated realization of the free Hamiltonian~$S$ perturbed by the \emph{singular potential} $\delta'$.
We notice that there is a related but completely different notion of $\delta'$-\emph{interaction}; namely, following a widely accepted agreement, the Hamiltonian $S_{\beta,\delta'}$ with $\delta'$-\emph{interaction} that is given formally by
\[
    -\frac{\rmd^2}{\rmd x^2} - \beta\langle\,\cdot\,,\delta'\rangle \,\delta',
\]
should be interpreted as the free Schr\"odinger operator $S_{\beta,\delta'}y=-y''$ acting on the domain
\begin{eqnarray}
   &\dom S_{\beta,\delta'}:=\bigl\{y \in W_2^2(\bR\setminus\{0\})\mid y'(0-)=y'(0+)=:y'(0),&\nonumber\\
    &\phantom{mmmmmmmmmmmmmmmmm} y(0+)-y(0-)=\beta y'(0)\bigr\},&\nonumber
\end{eqnarray}
see~\cite{Albeverio2edition,AlbeverioKurasov}. However, there is no clear physical motivation for this particular choice and, moreover, $S_{\beta,\delta'}$ cannot be used for defining a free Hamil\-to\-nian with $\delta'$-\emph{potential} \cite{NizhFAA2006}. We note that $\delta'$-interactions and, more generally, singular point interactions for Schr\"odinger operators in dimension one and higher have widely been discussed in both the mathematical and physical literature; see \cite{Antonevich:1999,BrascheFigariTeta,ExnerNeidhardtZagrebnov,IsmagilovKostyuchenko,
KostenkoMalamud:2009,KronigPenney,Mikhailets,ShkalikovSavchukTMMO2003} and also the extensive bibliography lists in the monographs \cite{Albeverio2edition,AlbeverioKurasov,Koshmanenko:1999}.

\v{S}eba~\cite{Seba:1985} was seemingly the first to realize the connection between the limiting behaviour of $S_\eps$ and $\delta'$-perturbations of the Schr\"odinger operators. The paper~\cite{Seba:1985} discussed an even wider class of problems including $S_\eps$ as a particular case; however, its results for the family~\eqref{eq:intr.Seps} erroneously state that the only possible norm-resolvent limit of~$S_\eps$ is the direct sum $S_-\oplus S_+$ of the free half-line Schr\"odinger operators subject to the Dirichlet boundary condition at $x=0$. Such a result would suggest that in  dimension~$1$ no non-trivial definition of the Schr\"odinger operator with potential~$\delta'$ is possible. Recalling that the Schr\"odinger operators are quantum mechanical Hamiltonians for a particle on the line, one would have to conclude that, in dimension~$1$, the $\delta'$ potential barrier is completely opaque, i.e., that the particle cannot tunnel through it.

However, such a conclusion is in contradiction with the numerical analysis of exactly solvable models of~\eqref{eq:intr.Seps} with piece-wise constant~$V$ of compact support performed recently by Zolotaryuk a.o.~\cite{ChristianZolotarIermak03}, \cite{ZolotarChristianIermak06,Zolotaryuk08,Zolotaryuk09,Zolotaryuk10}.
Namely, the authors demonstrated that for resonant~$V$, the limiting value of the transmission coefficient~$T_\eps(k)$ of the operator~$S_\eps$ is different from zero, thus indicating that the limiting operator~$S_0$ cannot be given by $S_-\oplus S_+$. In certain cases, $S_0$ was identified with the operator~$S(\theta)$ of~\eqref{eq:intr.Stheta}, \eqref{eq:intr.Stheta-dom}.
The operator of the form $S(\theta)$ also appear  in \cite{KurasovScrinziElander:1994, NizhFAA2006} as a realization of the pseudo-Hamiltonian $-\frac{\rmd^2}{\rmd x^2} + \alpha\delta'(x)$ by means of
the distribution theory  over discontinuous test functions.
Yet another evidence that the convergence result of~\cite{Seba:1985} cannot be true was derived in  the paper~\cite{GolovatyManko:2009}, where eigenvalue and eigenfunction asymptotics as $\eps\to0$ were studied for the full-line Schr\"odinger operators given by the differential expression
\[
    -\frac{\rmd^2}{\rmd x^2} + \frac1{\eps^2}V\Bigl(\frac{x}\eps\Bigr) + W(x),
\]
where $V$ is regular and of compact support and $W$ is unbounded at infinity. Namely, the eigenfunctions were shown to satisfy in the limit the Dirichlet condition~$y(0)=0$ in the non-resonant case and the interface condition~\eqref{eq:intr.Stheta-dom} in a resonant case, thus again exhibiting the zero-energy dichotomy.

The above results motivated us to re-examine the convergence of the Schr\"odinger operator family~\eqref{eq:intr.Seps} for real-valued~$V$ not necessarily satisfying~\eqref{eq:intr.Vzeromean}. In our previous paper~\cite{GolovatyHryniv:2010}, we treated the case where $V$ is of compact support, taken~$[-1,1]$ for definiteness, and, in particular, confirmed the result established in~\cite{AlbeverioCacciapuotiFinco:2007,CacciapuotiExner:2007,CacciapuotiFinco:2007} under the restriction~\eqref{eq:intr.Vzeromean}. In~\cite{GolovatyHryniv:2010, GolovatyManko:2009}, a real-valued potential~$V$ was called \emph{resonant} if the Neumann Sturm--Liuoville operator~$\mathcal{N}$ given by
\[
    \mathcal{N} y := - y'' + V y
\]
on functions in the Sobolev space~$W_2^2(-1,1)$ satisfying the conditions $y'(-1)=0$, $y'(1)=0$ has a non-trivial null-space. This definition of a resonant case agrees with the one taken in~\cite{AlbeverioCacciapuotiFinco:2007}. We proved in~\cite{GolovatyHryniv:2010} that $S_\eps$ converge as $\eps\to0$ in the norm resolvent sense to the limit operator~$S_0$ that is equal to $S_-\oplus S_+$ in the non-resonance case and to $S(\theta)$ of \eqref{eq:intr.Stheta}, \eqref{eq:intr.Stheta-dom} in the resonance case with $\theta:=u(1)/u(-1)$, where $u$ is an eigenfunction of $\mathcal{N}$ corresponding to the eigenvalue zero. The eigenfunction is  unique up to a constant factor, so that the number~$\theta$ is well defined.

The main goal of this paper is to extend the convergence result of~\cite{GolovatyHryniv:2010} to the set of real-valued potentials~$V$ of the Faddeev--Marchenko class. In particular, we do not assume that $V$ is of non-zero mean or that $V$ decays exponentially. Nevertheless, we shall prove that the results on convergence of~$S_\eps$ established in~\cite{AlbeverioCacciapuotiFinco:2007,CacciapuotiExner:2007,CacciapuotiFinco:2007,GolovatyHryniv:2010} for particular cases remain valid.

Recall~\cite{Klaus:1982} that the Schr\"odinger operator~$S_1=-\frac{\rmd^2}{\rmd x^2}+V$ is said to possess a \emph{zero-energy resonance} (or \emph{half-bound state}) if there exists a solution~$y$ to the equation
\[
    - y'' +Vy= 0
\]
that is bounded on the whole line; the corresponding potential~$V$ is then called \emph{resonant}. Such a solution~$y$ is then unique up to a scalar factor and has nonzero limits $y(\pm\infty)$ at $\pm\infty$, so that the number~
\begin{equation}\label{eq:intr.theta}
    \theta:=\frac{y(+\infty)}{y(-\infty)}
\end{equation}
is well defined. Our main result reads as follows.

\begin{thm}\label{thm:main}
Assume that $V$ is real valued and belongs to the Faddeev--Marchenko class. Then the operator family~\eqref{eq:intr.Seps} converges as $\eps\to0$ in the norm resolvent sense, and the limit $S_0$ is equal to the direct sum $S_-\oplus S_+$ of the Dirichlet half-line Schr\"odinger operators~$S_\pm$ in the non-resonant case, and to the operator~$S(\theta)$ defined by~\eqref{eq:intr.Stheta}, \eqref{eq:intr.Stheta-dom}, with~$\theta$ of~\eqref{eq:intr.theta}, in the resonant case.
\end{thm}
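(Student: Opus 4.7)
The plan is to represent the kernel of $(S_\eps - z)^{-1}$ via Jost solutions of the unscaled operator $H := -\rmd^2/\rmd t^2 + V$ on $L_2(\bR)$, and then to pass to the limit $\eps \to 0$ by combining the low-energy Faddeev--Marchenko asymptotics of these solutions with a case analysis on the signs of the spatial variables. Under the unitary rescaling $(U_\eps f)(x) := \eps^{-1/2} f(x/\eps)$ one has $S_\eps = \eps^{-2} U_\eps H U_\eps^*$, so for $\myIm z > 0$ and $k := \sqrt{z}$ with $\myIm k > 0$,
\[
   (S_\eps - z)^{-1}(x, x') \;=\; \eps\,\frac{\phi_-(x_</\eps;\,\eps k)\,\phi_+(x_>/\eps;\,\eps k)}{W(\eps k)},
\]
where $\phi_\pm$ are normalized by $\phi_\pm(t, k)\,e^{\mp\rmi k t} \to 1$ as $t \to \pm\infty$, $W(k) := \phi_-'\phi_+ - \phi_-\phi_+'$ is their Wronskian, and $x_< := \min(x, x')$, $x_> := \max(x, x')$.

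The next ingredients come from the Faddeev--Marchenko theory: for $V \in L_1(\bR; (1+|x|)\rmd x)$, the Jost solutions extend continuously to $k = 0$, and $W(k) = -2\rmi k/T(k)$ with $T$ the transmission coefficient. Continuity of $T$ at $k = 0$, due to Guseinov and Klaus, then yields $W(0) \neq 0$ in the non-resonant case and $W(\eps k)/(\eps k) \to -2\rmi/T(0) \neq 0$ in the resonant case. The parameter $\theta$ of~\eqref{eq:intr.theta} enters through the zero-energy boundary values $\phi_-(+\infty, 0) = \theta$ and $\phi_+(-\infty, 0) = 1/\theta$, which in turn determine the zero-energy reflection coefficients via $1 + R_r(0) = \theta\,T(0)$ and $1 + R_l(0) = \theta^{-1} T(0)$.

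Using the asymptotic expansion $\phi_-(t, k) \sim T(k)^{-1} e^{-\rmi kt} + (R_r(k)/T(k))\,e^{\rmi kt}$ as $t \to +\infty$ and its symmetric counterpart for $\phi_+$ at $-\infty$, I would compute the pointwise limit of the above kernel on each of the four sign combinations of $(x, x')$. Each case reduces algebraically to the corresponding piece of the Green's function of the claimed limit $S_0$: for instance, when $0 < x < x'$ the limit becomes $[e^{\rmi k(x+x')} - e^{\rmi k(x'-x)}]/(2\rmi k)$ in the non-resonant case, matching the Dirichlet half-line resolvent on $\bR_+$, while off-diagonal blocks ($xx' < 0$) vanish. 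The resonant case produces instead the Green's function of $S(\theta)$ thanks to the reflection-transmission identities above.

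The main technical obstacle is upgrading this pointwise convergence of kernels to convergence in operator norm. My plan is to split $\bR^2$ into a bounded square $[-R,R]^2$ and its complement. On the square, uniform-in-$\eps$ bounds on the reduced Jost functions $m_\pm(t, k) := e^{\mp\rmi kt}\phi_\pm(t, k)$ supplied by the Marchenko integral equations, together with dominated convergence, would give Hilbert--Schmidt convergence. On the complement, the exponential decay $e^{-\myIm k\,|x-x'|}$ built into the Jost exponents, combined with the continuity of the scattering coefficients at $k = 0$, should allow the tails of $(S_\eps - z)^{-1}$ and $(S_0 - z)^{-1}$ to be compared uniformly in $\eps$. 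The most delicate zone is the transitional regime $x \to 0$ with $x/\eps \to \pm\infty$, where precise joint control of $m_\pm(t, \eps k)$ as $t \to \pm\infty$ and $\eps k \to 0$ is needed; this is where the weight $(1+|x|)$ of the Faddeev--Marchenko class is crucial.
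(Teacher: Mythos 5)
Your unitary-rescaling identity $(S_\eps - z)^{-1}(x,x') = \eps\,\phi_-(x_</\eps;\eps k)\,\phi_+(x_>/\eps;\eps k)/W(\eps k)$ is correct, and the pointwise kernel computation via the large-$t$/small-$k$ asymptotics of the reduced Jost functions is the right vehicle for identifying the limit Green's function in each sign sector; your dictionary $f_-(\cdot,0)=\theta f_+(\cdot,0)$, $W(k)=-2\rmi k/T(k)$, $T$ continuous at $k=0$ (Guseinov, Klaus) is precisely what the paper uses in Sections~2 and~4. But your route is genuinely different from the paper's: they do \emph{not} attempt a direct kernel-limit argument for $S_\eps$. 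Instead they split the problem into two steps. First they truncate the scaled potential onto a contracting window $(-x_\eps,x_\eps)$ with $x_\eps=\eps\xi_\eps$ calibrated by a weight $\rho_V$ adapted to the FM tail of $V$ (Lemma~\ref{lem:FMwithRho}), and prove norm resolvent convergence of the truncated family $\wt S_\eps$ to $S_0$ by constructing an explicit approximate solution $y_\eps$ of $(\wt S_\eps - k^2)y = f$ out of Jost solutions, matching it to $y_0=(S_0-k^2)^{-1}f$ at $\pm x_\eps$ with controllable jumps, and then appealing to $\|(\wt S_\eps-k^2)^{-1}\|\le|\myIm k^2|^{-1}$. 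Second, they control the tail perturbation $S_\eps-\wt S_\eps$ through the resolvent identity~\eqref{eq:prox.res} and a sharp two-sided Hardy-type boundedness criterion (Proposition~\ref{pro:prox.T}), reducing everything to the smallness of $\int_{|s|>\xi_\eps}|V|$, which is forced by the $\rho_V$-weight.

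This is where I see a genuine gap in your plan. The step ``on $[-R,R]^2$ use Hilbert--Schmidt convergence, on the complement compare tails'' does not close the operator-norm estimate. The difference kernel $(S_\eps - z)^{-1} - (S_0 - z)^{-1}$ is \emph{not} Hilbert--Schmidt on the complement of a bounded square: both kernels decay like $\rme^{-\myIm k\,|x-x'|}$ off the diagonal but are merely bounded \emph{on} it, so the diagonal strip $\{|x-x'|\le 1,\ |x|>R\}$ alone already has infinite two-dimensional Lebesgue measure and neither kernel is square-integrable there. You would therefore have to prove separately that the operator carried by this strip has small norm — e.g. by identifying, for $|x|,|x'|>R$ of the same sign, the leading $e^{\rmi k|x-x'|}$ contribution as a bounded convolution that \emph{cancels} between $S_\eps$ and $S_0$, with the residual controlled by $|R_{r,l}(\eps k)-R_{r,l}(0)|$ and by $\tau_\pm(R/\eps)$ acting on rank-one terms $e^{\rmi k(x+x')}$. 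That is exactly the kind of Schur/Hardy-type operator bound the paper carries out in Section~5 (Lemmata~\ref{lem:URW-norm} and~\ref{lem:UR+RWnorm}); your sketch just asserts it. Moreover, the mixed blocks ($|x|<R$, $|x'|>R$ and its transpose) are not addressed at all, and they are not covered by either of your two regimes. Finally, a smaller technical point: on $[-R,R]^2$ in the non-resonant case the reduced Jost function $m_+(t,k)$ grows like $1+|t|$ as $t\to-\infty$, so after substituting $t=x/\eps$ the dominating bound on the kernel is $O(R)$, not $O(1)$; this is harmless for fixed $R$ but must be stated, since without the $\eps$-prefactor cancellation the dominated-convergence step would not apply. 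I would recommend either adopting the paper's truncation-plus-Hardy strategy, or, if you want to keep the direct kernel approach, replacing the crude square/complement split by a decomposition adapted to the four sign sectors of $(x,x')$ and, within each, separating the convolution part $e^{\rmi k|x-x'|}$ (which cancels) from the rank-one parts $e^{\rmi k(x+x')}$, $e^{-\rmi k(x+x')}$ (whose coefficients converge by your scattering-coefficient asymptotics).
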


We establish Theorem~\ref{thm:main} in two steps. On the first step, we adapt our approach of~\cite{GolovatyHryniv:2010} and establish the theorem with the potential~$V_\eps:=\eps^{-2}V(\eps^{-1}\,\cdot\,)$ truncated onto the contracting intervals~$(-x_\eps,x_\eps)$, for a suitably defined $x_\eps$. On the second step, we show that the part of the potential~$V_\eps$ outside $(-x_\eps,x_\eps)$ introduces a perturbation that is too weak to affect the limit of the resolvents as $\eps\to0$. In both the resonant and non-resonant cases, the reasoning is based on a careful asymptotic analysis of the Jost solutions for $S_\eps$ and $S_1$ at infinity and in the vicinity of the splitting points~$\pm x_\eps$.

It should be noted that the above theorem seemingly cannot be extended beyond the Faddeev--Marchenko class of potentials~$V$. Indeed, take $V(x)= (1+x^2)^{-1}$; then the family $V_\eps(x):=(\eps^2 + x^2)^{-1}$ increases as $\eps$ decreases to zero. The quadratic forms $\mathfrak{s}_\eps$ generated by the Schr\"odinger operators $S_\eps$ are closed and well defined on~$W_2^1(\bR)$. For $y \in W_2^1(\bR)$ the limit
\begin{eqnarray}
    \lim_{\eps\to0+} \mathfrak{s}_\eps(y,y) &=& \displaystyle\int_{\bR}|y'(x)|^2\,\rmd x
        + \lim_{\eps\to0+} \int_{\bR}\frac{|y(x)|^2\,\rmd x}{\eps^2 +x^2}\nonumber\\
            &=& \displaystyle \int_{\bR}|y'(x)|^2\,\rmd x
        + \int_{\bR} \frac{|y(x)|^2\,\rmd x}{x^2} =: \mathfrak{s}_0(y,y)\nonumber
\end{eqnarray}
is finite if and only if $y(0)=0$; the ``only if'' part is clear, and the ``if'' part follows from the fact that the Hardy operator,
\[
        g \mapsto \frac1x\int_0^x g(t)\,\rmd t
\]
is bounded in~$L_2(0,1)$~\cite[Ch.~9.9]{HLP}. By Theorem~S.14 of~\cite{Reed-Simon-I}, the quadratic form~$\mathfrak{s}_0$ is closed on the domain~$\{y\in W_2^1(\bR) \mid y(0)=0\}$ and  the Schr\"odinger operators~$S_\eps$ converge in the strong resolvent sense as $\eps\to0+$ to the operator~$S_0$ corresponding to the quadratic form~$\mathfrak{s}_0$. Since $\dom S_0 \subset \dom \mathfrak{s}_0$ by the first representation theorem~\cite[Theorem~VI.2.1]{Kato}, we conclude that $S_0$ is the direct sum of two half-line Schr\"odinger operators $S_-$ and $S_+$ with the Bessel potential $1/x^2$. Thus there is no resonance effect present here and the limiting operator contains a nontrivial potential, in contrast to the situation of Theorem~\ref{thm:main}.

It is worth noting that the operators $S_\eps$ given by \eqref{eq:intr.Seps} are scale-invariant and therefore it should be expected that the limit operator $S_0$ possesses the same property. And indeed, both in the non-resonant and resonant cases the limit operators $S_-\oplus S_+$ and $S(\theta)$ describe the so-called scale-invariant point interactions~\cite{FulopTsutsuiCheon:2003}.

We also remark that one can construct infinitely many resonant potentials starting with an arbitrary compactly supported $V\in L_1(\bR)$ and considering a family $\{\alpha V\}$ with a real coupling constant~$\alpha$. As shown in~\cite{GolovatyHryniv:2010} (see also~\cite{ChristianZolotarIermak03}, \cite{ToyamaNogami}--\cite{Zolotaryuk10} for various step-like potentials $V$), there is an infinite discrete set $\Sigma$ of $\alpha$'s such that $\alpha V$ is resonant for every $\alpha\in\Sigma$. The same conclusion holds if $V$ is supported by the whole line but decays rapidly at infinity. We are going to discuss this question elsewhere.

The rest of the paper is organized as follows.
In Section~\ref{sec:pre}, we recall the definition and some important properties of the Jost solutions of the Schr\"odinger equation; also, the low-energy and large-$x$ asymptotics of the Jost solutions and of some special solutions are established therein.
In Section~\ref{sec:aux-fam}, we study the norm resolvent convergence of the auxiliary family of  Schr\"odinger operators~$\wt S_\eps$ with potentials $\chi_\eps V_\eps$, where $\chi_\eps$ is the characteristic function of a neighbourhood of the origin squeezing to zero as $\eps\to0$.  Section~\ref{sec:Jost} is devoted to investigation of the resolvent of $\wt S_\eps$ via the subtle asymptotic analysis of the Jost solutions of this ope\-rator. Simultaneously,  we describe the limit behaviour  of scattering coefficients for the ope\-rator~$\wt S_\eps$. In these three sections,  the non-resonant and resonant cases have to be treated separately.
Finally, in the last section we  establish proximity of the operator families $S_\eps$ and $\wt S_\eps$ in the norm resolvent sense, which, in combination with Theorem~\ref{thm:conv-tildeS},
gives a complete proof of the main result.

\smallskip\noindent\emph{Notations.} Throughout the paper,  $C_1$, $C_2$, $C_3$, $C_4$, and $C_5$ shall stand for the constants of Proposition~\ref{pro:fpm} and Lemma~\ref{lem:f+-asymptoticsResonance}. Letters $c_j$ denote various posi\-ti\-ve numbers independent of~$\eps$, whose values might be different in different proofs, and $\|f\|$ stands for the $L_2(\bR)$-norm of a function~$f$.

\section{Asymptotics  of Jost solutions}\label{sec:pre}

\subsection{Large-$x$ behaviour at low energies}
Throughout the paper $V$ denotes a fixed real-valued potential of the Faddeev--Marchenko class, i.e., satisfying
\begin{equation*}\label{eq:Jost.FM}
    \int_\mathbb{R} (1+ |t|)|V(t)|\,\rmd t < \infty.
\end{equation*}
Next, we denote by $f_+(\,\cdot\,,k)$ and $f_-(\,\cdot\,,k)$ respectively the \emph{right} and \emph{left Jost solutions} of the Schr\"odinger equation
\begin{equation}\label{eq:Jost.de}
    -y'' + V y = k^2 y
\end{equation}
for complex $k$ with $\myIm k \ge0$. The Jost solution $f_+(x,k)$ is asymptotic to $\rme^{\rmi kx}$ as $x \to +\infty$, while $f_-(x,k)$ is asymptotic to~$\rme^{-\rmi kx}$ as $x\to -\infty$. For non-zero~$k$, the Jost solutions exist whenever the potential $V$ is integrable at $\pm\infty$~\cite[Ch.~I.1.3]{ChadanSabatier:1989} or even under somewhat weaker assumptions~\cite{Ker:2007}; however, for $V$ of the Faddeev--Marchenko class they have some special properties.
Namely, set
\begin{eqnarray}
    &\sigma_-(x):=\dint_{-\infty}^x |V(t)|\,\rmd t,\qquad
     &\tau_-(x) := \int_{-\infty}^x (1+|t|)|V(t)|\,\rmd t,\nonumber\\
    &\sigma_+(x):=\dint_x^{\infty} |V(t)|\,\rmd t, \qquad
     &\tau_+(x):=\int_x^{\infty} (1+|t|)|V(t)|\,\rmd t;\nonumber
\end{eqnarray}
then the claims of \cite[Lemma~1]{DT} and \cite[Lemma~3.1.3]{Marchenko:1986} can be stated as follows.

\begin{prop}\label{pro:fpm}
Assume that the potential $V$ is of the Faddeev--Marchenko class.  Then there are numbers $C_j$,
$j=1,\dots,4$, such that the following holds for every $k$ in the closed upper-half complex plane:
\begin{eqnarray}
 & |f_+(x,k) - \rme^{\rmi kx}|    \le  C_1 |\rme^{\rmi kx}|\, \tau_+(x),\qquad\qquad
                          &x\ge0; \label{eq:f+0} \\
               &|f_+(x,k) - \rme^{\rmi kx}| \le  C_2 |\rme^{\rmi kx}|\bigl(1 +|x|\bigr),\quad\qquad
                          &x<0, \label{eq:f+R} \\
   &\;\;\:|f'_+(x,k) - \rmi k f_+(x,k)|\le  C_3 |\rme^{\rmi kx}|\,\sigma_+(x),\qquad
                        &x\ge0  \label{eq:f+'0},\\
   &\,|f'_+(x,k) - \rmi k f_+(x,k)| \le  C_4 |\rme^{\rmi kx}|,\qquad\qquad
                         &x<0\label{eq:f+'R}.
\end{eqnarray}
Similar estimates hold for $f_-(\,\cdot\,,k)$.
\end{prop}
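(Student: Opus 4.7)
The plan is to pass from $f_+(\cdot,k)$ to the modified Jost function $m_+(x,k) := \rme^{-\rmi kx} f_+(x,k)$, which tends to $1$ as $x\to+\infty$ and satisfies the Volterra integral equation
\[
    m_+(x,k) = 1 + \int_x^{\infty} \frac{\rme^{2\rmi k(t-x)} - 1}{2\rmi k}\, V(t)\, m_+(t,k)\, \rmd t,
\]
obtained by variation of parameters from the free Schr\"odinger equation using the fundamental system $\{\rme^{\rmi kx},\rme^{-\rmi kx}\}$. The decisive feature, which encodes uniformity all the way down to $k=0$, is the $k$-independent kernel bound
\[
    \left|\frac{\rme^{2\rmi k(t-x)} - 1}{2\rmi k}\right| = \left|\int_0^{t-x}\rme^{2\rmi ks}\,\rmd s\right| \le t - x, \qquad t\ge x,\ \myIm k \ge 0,
\]
which trades the apparent singularity $1/k$ of the free Green's function for the polynomial factor $t-x$.

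I then solve the equation by Picard iteration: $\mu_0 := 1$ and $\mu_{n+1}(x,k) := \int_x^\infty \tfrac{\rme^{2\rmi k(t-x)} - 1}{2\rmi k}\, V(t)\, \mu_n(t,k)\, \rmd t$, so that $m_+ = \sum_n \mu_n$. For $x \ge 0$ the elementary inequality $t - x \le 1+|t|$ and the identity $(1+|t|)|V(t)| = -\tau_+'(t)$ yield by straightforward induction the sharp majorant $|\mu_n(x,k)| \le \tau_+(x)^n/n!$; summing the series gives $|m_+(x,k) - 1| \le \exp(\tau_+(x)) - 1 \le \exp(B)\,\tau_+(x)$, where $B := \int_\bR (1+|t|)|V(t)|\,\rmd t$, so \eqref{eq:f+0} holds with $C_1 := \exp(B)$ after multiplying through by $|\rme^{\rmi kx}|$. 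For $x < 0$ the coarser estimate $t - x \le (1+|x|)(1+|t|)$ combined with a careful bookkeeping of the iteration (most conveniently by controlling the ratio $|m_+(x,k)|/(1+|x|)$ via a Gronwall-type argument, as in \cite[Lemma~3.1.3]{Marchenko:1986} and \cite[Lemma~1]{DT}) produces $|m_+(x,k) - 1| \le C_2(1+|x|)$, which is \eqref{eq:f+R}.

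The derivative estimates follow by differentiating the integral equation. Since the kernel vanishes at $t = x$, the boundary term drops and
\[
    m_+'(x,k) = -\int_x^{\infty}\rme^{2\rmi k(t-x)}\,V(t)\,m_+(t,k)\,\rmd t,
\]
so $f_+'(x,k) - \rmi k f_+(x,k) = \rme^{\rmi kx}\, m_+'(x,k)$. The bound $|\rme^{2\rmi k(t-x)}| \le 1$ on $\myIm k \ge 0$ for $t \ge x$ yields $|m_+'(x,k)| \le \int_x^\infty|V(t)|\,|m_+(t,k)|\,\rmd t$. Combined with the a priori bounds on $m_+$ established above, this gives $|m_+'(x,k)| \le C_3\sigma_+(x)$ for $x \ge 0$ (from the uniform estimate $|m_+(t,k)| \le 1 + \exp(B)B$ on $t \ge 0$), and $|m_+'(x,k)| \le C_4$ for $x < 0$ (from $|m_+(t,k)| \le 1 + C_2(1+|t|)$ together with the Faddeev--Marchenko integrability of $V$); these are \eqref{eq:f+'0} and \eqref{eq:f+'R} respectively.

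The main obstacle is the uniformity as $k \to 0$: the naive majorant $|\sin(k(t-x))/k| \le 1/|\myIm k|$ blows up there, and the replacement by the factor $t-x$ is precisely what forces the Faddeev--Marchenko first-moment hypothesis on $V$ to appear throughout the estimates. Technically, the most delicate step is the linear-in-$|x|$ bound \eqref{eq:f+R} for $x<0$: a crude Picard iteration risks propagating spurious higher-moment requirements on $V$, and the correct bookkeeping is what is carried out in the references cited above. The parallel bounds for the left Jost solution $f_-$ follow either by a mirror-symmetric treatment of the integral equation on $(-\infty, x]$, or equivalently from those for $f_+$ via the substitution $V(t) \mapsto V(-t)$, which preserves the Faddeev--Marchenko class with the same moments.
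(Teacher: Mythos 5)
Your argument is correct and is in fact a reconstruction of the standard Volterra-equation proof that the paper delegates entirely to \cite[Lemma~1]{DT} and \cite[Lemma~3.1.3]{Marchenko:1986}: the paper offers no proof of its own for Proposition~\ref{pro:fpm}, only these citations, and your passage to $m_+(x,k)=\rme^{-\rmi kx}f_+(x,k)$, the $k$-uniform kernel bound $\bigl|(\rme^{2\rmi k(t-x)}-1)/(2\rmi k)\bigr|\le t-x$, and the Picard iteration with majorant $\tau_+(x)^n/n!$ are exactly the ingredients of the cited lemmas. The one place where you lean on the references rather than spelling the estimate out --- the linear-in-$|x|$ bound for $x<0$ --- is indeed the only nonroutine step, and you correctly identify it as such, so the write-up is sound.
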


In what follows, we shall consider restrictions of $V_\eps$ onto contracting intervals $(x_\eps,x_\eps)$, the choice of $x_\eps$ being tailored for the given~$V$ in a special way. We start with the following observation.

\begin{lem}\label{lem:FMwithRho}
For $V$ of the Faddeev--Marchenko class
  there exists an even, conti\-nuous, and positive function $\rho_V\colon \bR\to \bR$ such that
  \begin{itemize}
    \item[(i)] $\rho_V$ strictly increases for $x>0$;
    \item[(ii)]  $|x|^{-1}\rho_V(x)\to +\infty$ as $|x|\to +\infty$;
    \item[(iii)] $\int_\bR \rho_V(x)|V(x)|\,\rmd x < \infty$.
  \end{itemize}
 \end{lem}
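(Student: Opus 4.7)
The Faddeev--Marchenko condition says exactly that $w_0(x) := (1+|x|)|V(x)|$ belongs to $L_1(\bR)$. Seeking $\rho_V$ in the form $\rho_V(x) := (1+|x|)\,h(|x|)$ for some continuous, strictly increasing, positive $h\colon [0,\infty)\to[1,\infty)$ with $h(r)\to\infty$, conditions (i) and (ii) of the lemma are automatic (the two factors are even, continuous, positive, and strictly increasing on $[0,\infty)$, and $\rho_V(x)/|x| = (1+|x|)/|x| \cdot h(|x|) \to \infty$), while (iii) reduces to $\int_\bR h(|x|) w_0(x)\,dx < \infty$. The problem thus becomes a de~la~Vall\'ee-Poussin type statement about $L_1$ functions: for any nonnegative $w_0 \in L_1(\bR)$, construct such an~$h$.

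To produce $h$, I would introduce the tail $F(R) := \int_{|x|\ge R} w_0(x)\,dx$, which decreases monotonically to zero as $R\to\infty$, and inductively pick radii $0 = R_0 < R_1 < R_2 < \cdots \to \infty$ with $R_n - R_{n-1} > 1$ and $F(R_n)\le 2^{-n}$ for $n\ge 1$. Defining $h$ as the piecewise linear interpolant of the nodes $(R_n,\,n+1)$ yields a continuous, strictly increasing, positive function diverging at infinity. Integrability is then obtained by bounding $h(|x|)\le n+2$ on each annulus $R_n \le |x| < R_{n+1}$, which gives
\[
  \int_\bR h(|x|)\,w_0(x)\,dx \le \sum_{n\ge 0} (n+2)\bigl(F(R_n)-F(R_{n+1})\bigr),
\]
and this series converges thanks to $F(R_0)=\|w_0\|_{L_1}<\infty$ and the geometric decay $F(R_n)\le 2^{-n}$ for $n\ge1$.

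There is no real obstacle here. The only calibration required is between the growth rate of $h$ and the decay rate of the tail $F$, and the geometric choice $F(R_n)\le 2^{-n}$ paired with the linear growth $h(R_n) = n+1$ trivially matches them. The piecewise-linear interpolation is merely a cosmetic device to replace a step function by something continuous and strictly monotone; any other mild smoothing would do equally well.
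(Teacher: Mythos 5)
Your argument is correct, but it takes a genuinely different route from the paper's. You reduce the problem to a de la Vall\'ee-Poussin--type statement ("every $L_1$ function admits an increasing unbounded weight with respect to which it is still integrable") and then build the weight $h$ explicitly by a dyadic decomposition of tails $F(R_n)\le 2^{-n}$ together with piecewise-linear interpolation; the telescoping bound $\sum(n+2)(F(R_n)-F(R_{n+1}))<\infty$ is fine (Abel summation gives $2F(R_0)+\sum_{n\ge1}F(R_n)$, which converges). The paper instead works directly with the continuous tail function $\tau(x):=\int_{|t|>|x|}(1+|t|)|V(t)|\,\rmd t$ and sets $\rho_V(x):=(1+|x|)/\tau^\alpha(x)$ for $\alpha\in(0,1)$, whereupon property (iii) drops out in one line as $-\int_0^\infty\tau'\tau^{-\alpha}\,\rmd x=\tau^{1-\alpha}(0)/(1-\alpha)$; the compactly supported case (where $\tau$ eventually vanishes and the quotient would be ill-defined) is dispatched separately by taking $\rho_V(x)=1+x^2$. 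Both proofs are valid; the paper's is a shorter closed-form calculation, while yours is more elementary and uniform in that it requires no case split for compactly supported~$V$ and makes the standard weight-improvement mechanism explicit.
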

\begin{proof} For $V$ of compact support, set $\rho_V(x):=1+x^2$. Otherwise the function
\[
    \tau(x):=\tau_+(|x|) + \tau_-(-|x|)=\int_{|t|>|x|} \bigl(1+|t|\bigr)|V(t)|\,\rmd t
\]
is strictly positive and even; moreover, it does not increase for $x>0$ and vanishes at infinity. We set
\[
    \rho_V(x):= \frac{1+|x|}{\tau^\alpha(x)}
\]
for $\alpha\in(0,1)$; then \textit{(i)} and \textit{(ii)} are immediate, while \textit{(iii)} follows from
\begin{eqnarray}\nonumber
    \int_{\bR} \rho_V(x)|V(x)|\,\rmd x
        &=& \int_0^{\infty} \frac{(1+x)\left(|V(x)|+ |V(-x)|\right)}{\tau^{\alpha}(x)}\,\rmd x\\
    &=&-\int_0^{\infty} \frac{\tau'(x)}{\tau^{\alpha}(x)}\,\rmd x
    =\frac{\tau^{1-\alpha}(0)}{1-\alpha}<\infty.\nonumber
\end{eqnarray}
The proof is complete.
\end{proof}

Fix $\rho_V$ as in  Lemma~\ref{lem:FMwithRho} and for $\eps>0$ denote by $\xi_\eps$ the unique positive solution of the equation $\rho_V(\xi) = 1/\eps$.  Such a solution $\xi_\eps$  exists for all  sufficiently small positive $\eps$. It follows from (i) and (ii) of  Lemma~\ref{lem:FMwithRho} that $\xi_\eps\to+\infty$ and $x_\eps:=\eps\xi_\eps\to0$ as $\eps\to0$. This choice of $x_\eps$ and $\xi_\eps$ is fixed for the rest of the paper.

Set $W\{f,g\}:=fg'-f'g$ to be the Wronskian of functions $f$ and $g$ and define $D(k)$ as the Wronskian of the Jost solutions $f_+(\,\cdot\,,k)$ and $f_-(\,\cdot\,,k)$,
\[
    D(k) := W\{f_+(\,\cdot\,,k),f_-(\,\cdot\,,k)\} = f_+(x,k)f'_-(x,k) - f'_+(x,k)f_-(x,k).
\]
Fix a nonzero $k \in \overline{\bC^+}$; then Proposition~\ref{pro:fpm} implies the following asymp\-to\-tic behaviour of the Jost solutions.

\begin{lem}\label{lem:f+-asymptotics}
The following holds as $\eps \to 0$:
\begin{eqnarray}         \label{eq:f+-xi_eps}
    &f_+(\xip,\eps k) \to 1,  \qquad\quad \qquad         &f_-(\xim,\eps k) \to 1;\\
    &\eps f_+(\xim,\eps k) \to 0, \quad\;\;\qquad      &\eps f_-(\xip,\eps k) \to 0;    \label{eq:f-+xi_eps}\\
    &\eps^{-1}f'_+(\xip,\eps k) \to \rmi k,\;\;\;\;\qquad   &\eps^{-1}f'_-(\xim,\eps k) \to -\rmi k;   \label{eq:f'+-xi_eps}\\
    &f'_+(\xim,\eps k)\to -D(0), \qquad        &f'_-(\xip,\eps k)\to D(0).     \label{f'-+xi_epsD0}
\end{eqnarray}
\end{lem}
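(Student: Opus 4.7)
The plan is to read off all four displayed groups of limits directly from the uniform bounds \eqref{eq:f+0}--\eqref{eq:f+'R} of Proposition~\ref{pro:fpm} (and the mirror bounds for $f_-$), substituting $x\mapsto\pm\xi_\eps$ and $k\mapsto\eps k$, and then exploiting two features of the setup: that $\eps k\xi_\eps=kx_\eps\to0$ (so each exponential factor $e^{\pm\rmi\eps k\xi_\eps}$ is bounded in modulus and tends to $1$), and that $\xi_\eps$ was defined through $\rho_V(\xi_\eps)=1/\eps$ precisely so that tail integrals of $|V|$ can be divided by $\eps$ and still vanish.

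For the first line, plugging into \eqref{eq:f+0} gives
\[
    |f_+(\xip,\eps k)-\rme^{\rmi\eps k\xip}|\le C_1|\rme^{\rmi\eps k\xip}|\,\tau_+(\xip),
\]
and since $\xi_\eps\to+\infty$ the tail $\tau_+(\xip)$ tends to $0$ while $\rme^{\rmi\eps k\xip}\to1$; the $f_-$-analogue handles $f_-(\xim,\eps k)$. For the second line, \eqref{eq:f+R} yields $|f_+(\xim,\eps k)|\le(1+C_2(1+\xi_\eps))|\rme^{-\rmi\eps k\xip}|=O(\xi_\eps)$, whence $\eps f_+(\xim,\eps k)=O(\eps\xi_\eps)=O(x_\eps)\to0$; again symmetrically for $f_-$.

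The third line is the crux and is where Lemma~\ref{lem:FMwithRho} is used. From \eqref{eq:f+'0} applied at $\xi_\eps$,
\[
    \bigl|\eps^{-1}f_+'(\xip,\eps k)-\rmi k f_+(\xip,\eps k)\bigr|\le C_3|\rme^{\rmi\eps k\xip}|\,\eps^{-1}\sigma_+(\xip),
\]
so combined with $f_+(\xip,\eps k)\to1$ the claim reduces to $\eps^{-1}\sigma_+(\xi_\eps)\to0$. The defining identity $\rho_V(\xi_\eps)=1/\eps$ and the strict monotonicity of $\rho_V$ on $(0,\infty)$ give
\[
    \eps^{-1}\sigma_+(\xi_\eps)=\rho_V(\xi_\eps)\int_{\xi_\eps}^{\infty}|V(t)|\,\rmd t\le \int_{\xi_\eps}^{\infty}\rho_V(t)|V(t)|\,\rmd t,
\]
which vanishes as $\xi_\eps\to\infty$ because $\rho_V|V|\in L_1(\bR)$ by item (iii) of Lemma~\ref{lem:FMwithRho}. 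The companion statement $\eps^{-1}f_-'(\xim,\eps k)\to-\rmi k$ follows identically, using the evenness of $\rho_V$ to control $\sigma_-(\xim)$. This is the only step that genuinely needs the Faddeev--Marchenko hypothesis together with the tailored choice of $\xi_\eps$, and is the step I expect to be the main obstacle; for compactly supported $V$ it is trivial since $\sigma_\pm(\pm\xi_\eps)=0$ eventually.

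The fourth line I would extract from the Wronskian identity. Since $D(k)$ is $x$-independent and continuous on $\overline{\bC^+}$, evaluating at $x=\xim$ yields
\[
    D(\eps k)=f_+(\xim,\eps k)f_-'(\xim,\eps k)-f_+'(\xim,\eps k)f_-(\xim,\eps k).
\]
By the estimates just established, $f_+(\xim,\eps k)=O(\xi_\eps)$ and $f_-'(\xim,\eps k)=O(\eps)$, so the first product is $O(\eps\xi_\eps)=O(x_\eps)=o(1)$; meanwhile $D(\eps k)\to D(0)$ by continuity and $f_-(\xim,\eps k)\to1$ by the first line. Therefore $f_+'(\xim,\eps k)\to -D(0)$, and evaluating the Wronskian at $x=\xip$ gives the twin statement $f_-'(\xip,\eps k)\to D(0)$.
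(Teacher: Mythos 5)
Your proof is correct and follows essentially the same route as the paper: each line is extracted from the bounds of Proposition~\ref{pro:fpm} specialized at $x=\pm\xi_\eps$, $k\mapsto\eps k$, the key estimate $\eps^{-1}\sigma_\pm(\pm\xi_\eps)\le\int_{|t|>\xi_\eps}\rho_V(t)|V(t)|\,\rmd t\to0$ from Lemma~\ref{lem:FMwithRho}(iii) handles the third line, and the fourth line is read off from the Wronskian identity using the limits just established. The only cosmetic difference is that the paper arranges the Wronskian as a $2\times2$ determinant with $\eps$ and $\eps^{-1}$ balanced across rows, whereas you keep the raw products and note that $f_+(\xim,\eps k)f_-'(\xim,\eps k)=O(\xi_\eps)\cdot O(\eps)=o(1)$; the content is identical.
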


\begin{proof}
  Relations \eqref{eq:f+-xi_eps} follow immediately from \eqref{eq:f+0} and an analogous estimate for $f_-$. Also,  \eqref{eq:f+R} and a similar inequality for $f_-$ imply that
  $$
  |f_+(\xim,\eps k)|+|f_-(\xip,\eps k)|\le  c (1+\xip)
  $$
  for some $c>0$ independent of~$\eps$;
  since $\eps\xip\to 0$ as $\eps\to 0$, formulae \eqref{eq:f-+xi_eps} follow.

  Next, estimate \eqref{eq:f+'0} yields
  \begin{equation}\label{est:f+-ik}
    |\eps^{-1}f'_+(\xip,\eps k) - \rmi k |\le  C_3 \eps^{-1}\sigma_+(\xip)+|k|\,|f_+(\xip,\eps k)-1|.
  \end{equation}
 Now the choice of the~$\xip$ and the properties of the function~$\rho_V$ justified in Lemma~\ref{lem:FMwithRho} show that
 \begin{eqnarray}
    \eps^{-1}\sigma_+(\xip)&=&\eps^{-1}\int_{\xip}^{\infty} |V(t)|\,\rmd t\nonumber\\
    &&\leq \eps^{-1}\int_{\xip}^{\infty}\frac{\rho_V(t) |V(t)|}{\rho_V(\xip)}\,\rmd t= \int_{\xip}^{\infty}\rho_V(t) |V(t)|\,\rmd t\to 0\nonumber
 \end{eqnarray}
as $\eps\to 0$. A passage to the limit in \eqref{est:f+-ik} establishes the first relation of~\eqref{eq:f'+-xi_eps}; the second one is justified similarly.

Finally, combination of the equalities
\[
   D(\eps k)=
   \left|\begin{array}{lr}
     \eps f_+(\xim,\eps k) &  \phantom{\eps^{-1}}f_-(\xim,\eps k)\\
     \phantom{\eps} f_+'(\xim,\eps k) & \eps^{-1} f_-'(\xim,\eps k)
   \end{array}\right|=
   \left|\begin{array}{lr}
      \phantom{\eps^{-1}}f_+(\xip,\eps k) & \eps f_-(\xip,\eps k)\\
      \eps^{-1} f_+'(\xip,\eps k) & \phantom{\eps}f_-'(\xip,\eps k)
   \end{array}\right|
\]
with relations \eqref{eq:f+-xi_eps}--\eqref{eq:f'+-xi_eps} proved above results in~\eqref{f'-+xi_epsD0}. The proof is complete.
\end{proof}

\subsection{Refinement in the resonant case}

In the resonant case, the Jost solutions $f_+$ and $f_-$ become linearly dependent at $k=0$,
i.e.,
\begin{equation}\label{eq:theta}
    f_-(\,\cdot\,,0) = \theta f_+(\,\cdot\,,0)
\end{equation}
for some real nonzero~$\theta$. The corresponding half-bound state $y$ can be given as $y=c_- f_-(\,\cdot\,,0)$ or $y=c_+ f_+(\,\cdot\,,0)$, with some constants $c_\pm$ satisfying the relation $c_+=\theta c_-$. Since  $y(\pm\infty)=c_\pm$ by the definition of the Jost solutions, the number~$\theta$ as defined by~\eqref{eq:theta} is the same as in \eqref{eq:intr.theta}.

Moreover, in this case, more precise information on the asymptotic behaviour of $f_\pm(\,\cdot\,,0)$ at infinity is available.  Namely, we shall prove that the solutions $f_\pm(\,\cdot\,,\eps k)$ remain then bounded over $(-\xi_\eps,\xi_\eps)$ uniformly in $\eps$ and shall re-examine the convergence in \eqref{eq:f-+xi_eps} and \eqref{f'-+xi_epsD0}.
To do this, we need another pair of solutions to equation~\eqref{eq:Jost.de}.

\begin{lem}\label{lem:g+-asymptotics}
 There exist two solutions $g_+(\,\cdot\,, k)$ and $g_-(\,\cdot\,, k)$ of \eqref{eq:Jost.de} satisfying the relations $W\{f_+,g_+\}=1$ and $W\{f_-,g_-\}=-1$ and such that, for some $x_+>0$, $x_-<0$ and all $k\in\overline{\bC^+}$, the following inequalities hold:
\begin{eqnarray}
    |g_+(x,k)| &\le  6 |\rme^{-\rmi kx}|\, x,
                         \qquad &x>x_+, \label{eq:g+xi_eps}\\
     |g_-(x,k)| &\le  6 |\rme^{\rmi kx}|\,|x|,
                         \qquad &x<x_-. \label{eq:g-xi_eps}
\end{eqnarray}
\end{lem}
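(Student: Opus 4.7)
The plan is to construct $g_+$ (and symmetrically $g_-$) by reduction of order from $f_+$ (resp.\ $f_-$), using the tight control on $f_\pm$ supplied by Proposition~\ref{pro:fpm}. First I would use estimate~\eqref{eq:f+0} together with the fact that $\tau_+(x)\to0$ as $x\to+\infty$ (an immediate consequence of the Faddeev--Marchenko hypothesis) to fix $x_+>0$ so large that $C_1\tau_+(x_+)\le 1/2$. Uniformly in $k\in\overline{\bC^+}$ this yields the two-sided bound $\tfrac12|\rme^{\rmi kx}|\le|f_+(x,k)|\le\tfrac32|\rme^{\rmi kx}|$ for all $x\ge x_+$, so $f_+(\,\cdot\,,k)$ is zero-free on $[x_+,\infty)$ and the reduction-of-order ansatz
\[
 g_+(x,k):=f_+(x,k)\int_{x_+}^{x}\frac{\rmd t}{f_+(t,k)^{2}},\qquad x\ge x_+,
\]
defines a second solution of \eqref{eq:Jost.de} on $[x_+,\infty)$ with $W\{f_+,g_+\}\equiv1$, as a direct differentiation shows. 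I would then prolong $g_+(\,\cdot\,,k)$ to the whole real line as the unique solution of \eqref{eq:Jost.de} determined by the Cauchy data just computed at $x=x_+$; the constancy of the Wronskian propagates the identity $W\{f_+,g_+\}\equiv1$ to all of $\bR$.

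For the pointwise bound~\eqref{eq:g+xi_eps}, I would combine $|f_+(x,k)|\le\tfrac32|\rme^{\rmi kx}|$ with the pointwise estimate $|f_+(t,k)|^{-2}\le4|\rme^{-\rmi kt}|^{2}=4\,\rme^{2\myIm k\cdot t}$. Since $t\mapsto\rme^{2\myIm k\cdot t}$ is non-decreasing for $\myIm k\ge0$, its integral over $[x_+,x]$ is dominated by $(x-x_+)\,\rme^{2\myIm k\cdot x}\le x\,\rme^{2\myIm k\cdot x}$. The elementary identity $|\rme^{\rmi kx}|\,\rme^{2\myIm k\cdot x}=|\rme^{-\rmi kx}|$ then delivers
\[
 |g_+(x,k)|\le\tfrac32|\rme^{\rmi kx}|\cdot4x\,\rme^{2\myIm k\cdot x}=6x\,|\rme^{-\rmi kx}|,
\]
which is the required estimate.

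The construction of $g_-$ mirrors the above with the roles of $\pm\infty$ interchanged: I would pick $x_-<0$ with $|x_-|$ so large that the analogue of~\eqref{eq:f+0} for $f_-$ forces $|f_-(t,k)|\ge\tfrac12|\rme^{-\rmi kt}|$ on $(-\infty,x_-]$, and then set
\[
 g_-(x,k):=f_-(x,k)\int_{x}^{x_-}\frac{\rmd t}{f_-(t,k)^{2}},\qquad x\le x_-.
\]
The reversed integration limits are precisely what yields $W\{f_-,g_-\}\equiv-1$, and~\eqref{eq:g-xi_eps} follows from the same computation, the monotonicity step now reading that $t\mapsto\rme^{-2\myIm k\cdot t}$ is non-increasing on $[x,x_-]$ and hence maximal at the left endpoint~$t=x$, combined with $|\rme^{-\rmi kx}|\,\rme^{-2\myIm k\cdot x}=|\rme^{\rmi kx}|$.

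The whole argument is essentially routine once the uniform two-sided bound on $|f_\pm(\,\cdot\,,k)|$ at ``sufficient distance'' from the origin is in hand. The only point one might worry about is the uniformity in $k\in\overline{\bC^+}$, but this is supplied gratis by Proposition~\ref{pro:fpm}, whose constants and whose functions $\tau_\pm$ are $k$-independent; so no serious obstacle is anticipated.
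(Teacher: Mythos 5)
Your proof is correct and follows essentially the same route as the paper's: fixing $x_+$ so that $C_1\tau_+\le\tfrac12$ beyond it, constructing $g_\pm$ by reduction of order with the integral based at $x_\pm$, and bounding $\int_{x_+}^x|\rme^{2\rmi kt}|^{-1}\,\rmd t\le(x-x_+)\rme^{2\myIm k\cdot x}$ by monotonicity to get the factor $6x$. The only cosmetic difference is that you bound $(x-x_+)\le x$ directly, whereas the paper first replaces $1/x$ by $1/(x-x_+)$, but these are the same estimate.
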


\begin{proof}
It follows from the estimate~\eqref{eq:f+0} that there is an $x_+>0$ such that $f_+(x,k)\ne0$ for every $k\in\overline{\bC^+}$ and every $x\ge x_+$. Then the function
\[
    g_+(x,k):= f_+(x,k) \int_{x_+}^x \frac{\rmd t}{f_+^2(t,k)}
\]
is well defined for $x\ge x_+$, solves there equation~\eqref{eq:Jost.de}, and verifies the relation $W\{f_+,g_+\}=1$. Clearly, $g_+$ can uniquely be continued to the whole axis as a solution of~\eqref{eq:Jost.de}.

Without loss of generality we can (and shall) assume that $x_+$ is so large that $C_1 \tau_+(x) \le \tfrac12$  for all $k\in\overline{\bC^+}$ and all $x> x_+$, $C_1$ being the constant of~\eqref{eq:f+0}. Then
$$
 \frac12|\rme^{\rmi kx}| \le |f_+(x,k)| \le
 \frac32|\rme^{\rmi kx}|
$$
for such $k$ and $x$; therefore,
\[
    \frac{|\rme^{\rmi k x}g_+(x,k)|}{x}  \le \frac{6 |\rme^{2\rmi kx}|}{x-x_+} \int_{x_+}^x
        \frac{\rmd t}{|\rme^{2\rmi kt}|} \le 6,
\]
thus yielding~\eqref{eq:g+xi_eps}.

By similar arguments, there exists an $x_-<0$ and a solution $g_-(\,\cdot\,,k)$ of~\eqref{eq:Jost.de} that for all $x<x_-$ is equal to
\[
    g_-(x,k):= f_-(x,k) \int_{x}^{x_-} \frac{\rmd t}{f_-^2(t,k)}
\]
and satisfies the relation $W\{f_-,g_-\}=-1$ and the inequality~\eqref{eq:g-xi_eps}.
\end{proof}

\begin{lem}\label{lem:f+-asymptoticsResonance}
 Assume that $V$ is resonant and define $\theta$ via~\eqref{eq:theta}.
 Then for some $C_5>0$ and all $\eps$ small enough we have
\begin{equation}\label{eq:estF-+Resonance}
    \max_{|x|\leq \xi_\eps} |f_\pm(x,\eps k)| \leq C_5.
\end{equation}
Furthermore, the following holds as $\eps \to 0$:
\begin{eqnarray}
        \label{eq:f+-res}
    &f_+(-\xi_\eps,\eps k) \to \theta^{-1}, \qquad  &f_-(\xi_\eps,\eps k) \to \theta,\\
    &\eps^{-1} f'_+(-\xi_\eps,\eps k) \to \rmi k \theta,  \qquad
    &\eps^{-1} f'_-(\xi_\eps,\eps k) \to -\rmi k \theta^{-1}.\label{eq:f'+-res}
\end{eqnarray}
\end{lem}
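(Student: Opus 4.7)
The strategy is to decompose $f_+$ and $f_-$ through the fundamental systems $\{f_\mp(\cdot,k), g_\mp(\cdot,k)\}$ provided by Lemma~\ref{lem:g+-asymptotics}. Since $W\{f_-,g_-\}=-1$ and $W\{f_+,g_+\}=1$, one has the two identities
\[
  f_+(x,k) = a(k) f_-(x,k) + D(k) g_-(x,k),\qquad f_-(x,k) = \alpha(k) f_+(x,k) + D(k) g_+(x,k),
\]
with $a(k) = -W\{f_+,g_-\}(k)$, $\alpha(k) = W\{f_-,g_+\}(k)$, and $D(k) = W\{f_+,f_-\}(k)$, all continuous on $\overline{\bC^+}$. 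The resonance relation $f_-(\cdot,0) = \theta f_+(\cdot,0)$ immediately gives $D(0)=0$, $a(0)=\theta^{-1}$, and $\alpha(0)=\theta$, whence $a(\eps k)\to\theta^{-1}$ and $\alpha(\eps k)\to\theta$ as $\eps\to 0$.

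The central quantitative step is to prove that $\xi_\eps |D(\eps k)|\to 0$. I would derive this from the Faddeev-type representation
\[
  D(k) = 2\rmi k - \int_\bR \rme^{-\rmi k t} V(t) f_+(t,k)\,\rmd t,
\]
obtained by extracting the coefficients of $\rme^{\pm \rmi k x}$ in the asymptotic expansion of $f_+$ as $x\to -\infty$. The resonance condition is equivalent to $\int_\bR V f_+(\cdot,0)\,\rmd t = 0$ (also evident from $V f_+(\cdot,0) = f_+''(\cdot,0)$ together with $f_+'(\pm\infty,0)=0$). Subtracting this null quantity and using $|\rme^{-\rmi\eps k t}-1|\le \eps|k||t|$ with $V\in L_1((1+|t|)\,\rmd t)$, one bounds $|D(\eps k)|$ by $c\eps + \int_\bR |V|\,|f_+(\cdot,\eps k)-f_+(\cdot,0)|\,\rmd t$. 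The $\xi_\eps$-multiplier is then absorbed through the identity $\eps\xi_\eps = x_\eps\to 0$ combined with the sharper tail bounds $\xi_\eps\sigma_\pm(\pm\xi_\eps)\to 0$ that follow from the construction of $\xi_\eps$ via $\rho_V$.

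Once $\xi_\eps |D(\eps k)|\to 0$ is in place, the uniform bound~\eqref{eq:estF-+Resonance} follows by inserting the decompositions into the estimates of Proposition~\ref{pro:fpm} and Lemma~\ref{lem:g+-asymptotics}: for example, for $x\in[0,\xi_\eps]$,
\[
  |f_-(x,\eps k)|\le |\alpha(\eps k)||f_+(x,\eps k)| + |D(\eps k)||g_+(x,\eps k)| \le c + c\,\xi_\eps |D(\eps k)|\le C_5,
\]
and symmetric arguments dispose of $f_+$ on $[-\xi_\eps,0]$. The value limits~\eqref{eq:f+-res} follow by evaluating the representation at $x=\pm\xi_\eps$ and applying Lemma~\ref{lem:f+-asymptotics}: for instance $f_-(\xi_\eps,\eps k) = \alpha(\eps k) f_+(\xi_\eps,\eps k) + D(\eps k) g_+(\xi_\eps,\eps k) \to \theta\cdot 1 + 0 = \theta$.

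For the derivative limits~\eqref{eq:f'+-res} I would differentiate the decomposition, divide by $\eps$, and use $g_+'(\xi_\eps,\eps k)\to 1$ (from differentiating the explicit formula $g_+(x,k) = f_+(x,k)\int_{x_+}^x f_+^{-2}(t,k)\,\rmd t$) together with Lemma~\ref{lem:f+-asymptotics}. The summand $\alpha(\eps k)\,\eps^{-1}f_+'(\xi_\eps,\eps k)\to \theta\cdot\rmi k$ is immediate; for the remaining $\eps^{-1}D(\eps k) g_+'(\xi_\eps,\eps k)$, the Faddeev representation combined with the moment identity $\int_\bR tV(t)f_+(t,0)\,\rmd t = \theta^{-1}-1$ (obtained by integrating $V f_+(\cdot,0) = f_+''(\cdot,0)$ by parts and using the limits $f_+(\pm\infty,0) = 1,\theta^{-1}$) yields $\eps^{-1}D(\eps k)\to -\rmi k(\theta+\theta^{-1})$, so the second summand tends to $-\rmi k(\theta+\theta^{-1})$. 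The hard part is precisely this last identification: the mere continuity of $D$ at $k=0$ does not suffice, and the argument must exploit the sharp $\rho_V$-tail bounds to control the remainder $\eps^{-1}\int_\bR V[f_+(\cdot,\eps k)-f_+(\cdot,0)]\,\rmd t$ in the regime where $D$ need not be $C^1$.
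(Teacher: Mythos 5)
Your decomposition of $f_+$ and $f_-$ in the $\{f_\mp,g_\mp\}$ bases, the identification of $\alpha(0)=\theta$, $a(0)=\theta^{-1}$, the Wronskian relations, and the use of Lemma~\ref{lem:g+-asymptotics} to absorb the $D(\eps k)$-contribution through the factor $\xi_\eps$ all reproduce the paper's argument faithfully; so does the route to the derivative limits via $g_+'(\xi_\eps,\eps k)\to 1$. The genuine difference is in how the central fact
\[
  \lim_{\eps\to 0}\eps^{-1}D(\eps k)=-\rmi k(\theta+\theta^{-1})
\]
is obtained: the paper simply cites the differentiability of $D$ at $k=0$ and the value $\dot D(0)=-\rmi(\theta+\theta^{-1})$ from Klaus~\cite{Klaus:1988}, while you try to rederive it from the Faddeev-type representation.

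This rederivation has a genuine gap. After subtracting the resonance identity $\int V f_+(\,\cdot\,,0)=0$ and using the moment identity $\int tV f_+(\,\cdot\,,0)=\theta^{-1}-1$, the Faddeev representation gives
\[
  \eps^{-1}D(\eps k)=-\rmi k(1+\theta^{-1})
      +\eps^{-1}\int_\bR V(t)\bigl[f_+(t,\eps k)-f_+(t,0)\bigr]\,\rmd t+o(1),
\]
(up to an overall sign, which in your version of the representation is the opposite of what the Wronskian $D=W\{f_+,f_-\}$ actually produces). The crucial point is that the remainder term here does \emph{not} vanish; it converges to $k\int_\bR V\,\dot f_+(\,\cdot\,,0)$, where $\dot f_+(\,\cdot\,,0)$ is the $k$-derivative of the Jost solution, and this integral equals $\rmi(1-\theta)$ (write $\dot f_+(\,\cdot\,,0)=af_+(\,\cdot\,,0)+\rmi\,g_+(\,\cdot\,,0)$, note $\int V g_+=g_+'(+\infty,0)-g_+'(-\infty,0)=1-\theta$, and use $\int Vf_+(\,\cdot\,,0)=0$). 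It is precisely this nonzero contribution that turns $-\rmi k(1+\theta^{-1})$ into $-\rmi k(\theta+\theta^{-1})$. Therefore the plan of ``exploiting the sharp $\rho_V$-tail bounds to control the remainder'' cannot succeed as stated: the remainder is not small, and tail bounds on $V$ cannot identify its value; what is needed is a differentiability-in-$k$ statement for $f_+$ with control in $L_1(|V|\,\rmd t)$, which is essentially the content of Klaus's (or Deift--Trubowitz's) result being circumvented. The first half of your argument, proving $\xi_\eps|D(\eps k)|\to 0$ for the uniform bound~\eqref{eq:estF-+Resonance} and the value limits~\eqref{eq:f+-res}, is viable in outline but also relies on a bound $\int|V|\,|f_+(\,\cdot\,,\eps k)-f_+(\,\cdot\,,0)|=o(\xi_\eps^{-1})$ that your sketch does not actually establish; the paper gets this for free from $D(\eps k)=O(\eps)$.
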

\begin{proof} We shall only prove~\eqref{eq:estF-+Resonance} for the solution $f_+$, since the proof for~$f_-$ is completely analogous. Clearly, there are $\alpha_\eps$ and $\beta_\eps$ such that
\[
    f_+(\,\cdot\,,\eps k)
        = \alpha_\eps f_-(\,\cdot\,,\eps k) + \beta_\eps g_-(\,\cdot\,,\eps k),
\]
and it is easy to see that $\beta_\eps = D(\eps k)$.
Recall that $D(0)=0$ in the resonant case; moreover, equation~(2.29) of~\cite{Klaus:1988} implies that $D$ is differentiable at $k=0$ and that the derivative $\dot{D}(0)$ is
\[
\dot{D}(0)=-\rmi(\theta+\theta^{-1})\neq 0.
\]
It follows that
\begin{equation}\label{eq:limD(ek)/e}
   \lim_{\eps\to 0} \frac{D(\eps k)}{\eps}=-\rmi k(\theta+\theta^{-1});
\end{equation}
thus $\beta_\eps =O(\eps)$ as $\eps\to0$, and in combination with~\eqref{eq:g-xi_eps} this shows that
\begin{equation}\label{eq:f+alphaf-}
    \max_{x\in[-\xi_\eps,x_-]}|f_+(x,\eps k)-\alpha_\eps f_-(x,\eps k)|
            \leq 6|\beta_\eps| |\rme^{\rmi x_\eps k}| \xip = O(x_\eps)
\end{equation}
as $\eps\to0$.
For each fixed $x$, $f_\pm(x,k)$ are continuous functions of $k\in\overline{\bC^+}$.
We conclude from this and the relation~\eqref{eq:f+alphaf-} that $\alpha_\eps\to \theta^{-1}$ as $\eps\to0$. Therefore there exists $c>0$ such that, for all $\eps$ small enough,
\[
    \max_{x\in[-\xi_\eps,x_-]}|f_+(x,\eps k)|
        \leq 2\theta^{-1} \max_{x\in[-\xi_\eps,x_-]}|f_-(x,\eps k)|+1
            \leq c.
\]
A similar bound over $x_-\leq x \leq \xi_+$ follows from~\eqref{eq:f+0} and \eqref{eq:f+R}, thus establishing~\eqref{eq:estF-+Resonance} for $f_+$.

Specifying now~\eqref{eq:f+alphaf-} to $x=-\xi_\eps$ and recalling \eqref{eq:f+-xi_eps}, we get
\[
    \lim_{\eps\to0} f_+(-\xi_\eps,\eps k) = \theta^{-1}.
\]
Observing that
\begin{eqnarray}\nonumber
   &\displaystyle f_+(-\xi_\eps,\eps k)\,\frac{f'_-(-\xi_\eps,\eps k)}{\eps}
        - \frac{f'_+(-\xi_\eps,\eps k)}{\eps}\,f_-(-\xi_\eps,\eps k)\\\nonumber
       &\lefteqn{= \frac{D(\eps k)}\eps \to -\rmi k(\theta + \theta^{-1})}
\end{eqnarray}
as $\eps\to0$ by~\eqref{eq:limD(ek)/e}, we deduce from  Lemma~\ref{lem:f+-asymptotics} and \eqref{eq:f+-res} that
\[
    \lim_{\eps\to0}\eps^{-1} f'_+(-\xi_\eps,\eps k) = \rmi k \theta.
\]
The behaviour of the Jost solution~$f_-$ on the right-half line can be analyzed similarly.
\end{proof}


\section{Convergence of the auxiliary operator family $\wt S_\eps$}\label{sec:aux-fam}

\subsection{A general approach}
In what follows, we denote by $\chi_\eps$ the characteristic function of the interval~$[-x_\eps,x_\eps]$ and consider an auxiliary family~$\wt S_\eps$ of Schr\"odinger operators on the line defined via
\[
    \wt S_\eps := - \frac{\rmd^2}{\rmd x^2}
        + \frac1{\eps^2}\,V\Bigl(\frac{x}\eps\Bigr)\chi_\eps(x).
\]
Based on the results of the papers~\cite{AlbeverioCacciapuotiFinco:2007,CacciapuotiExner:2007, GolovatyHryniv:2010,GolovatyManko:2009} we expect that the family~$\wt
S_\eps$ converges to a limit $S_0$ as $\eps \to0$ in the norm resolvent sense and that~$S_0$ is
$S_-\oplus S_+$ in the non-resonant case and $S(\theta)$, with $\theta$ of~\eqref{eq:intr.theta} or~\eqref{eq:theta}, in the
resonant case.

To explain the main idea behind the constructions that follow, we fix a nonreal $k^2$, take an arbitrary
$f\in L_2(\bR)$, and set $y_0:=(S_0-k^2)^{-1}f$ and $\wt y_\eps:=(\wt S_\eps - k^2)^{-1}f$. To show that
$\wt y_\eps \to y_0$ as $\eps\to0$, we construct an auxiliary function~$y_\eps$ in $\dom \wt S_\eps$ such
that $q_\eps:=y_\eps - y_0 \to 0$ and $r_\eps:=(\wt S_\eps - k^2) y_\eps - f\to0$ in the $L_2$-norm as
$\eps\to0$; it then follows that
\[
    \wt y_\eps - y_0 = (y_\eps - y_0) - (y_\eps - \wt y_\eps) = q_\eps - (\wt S_\eps - k^2)^{-1} r_\eps
    \to0.
\]
A more careful analysis shows that the convergence of~$\wt y_\eps$ to~$y_0$ is uniform in~$f$ on bounded sets, thus establishing the norm resolvent convergence of $\wt S_\eps$ to $S_0$.

We start the construction of the approximation~$y_\eps$ by observing that the function~$\wt y_\eps=(\wt S_\eps - k^2)^{-1}f$ solves the equations
\[
    -y'' = k^2 y + f
\]
for $|x|> x_\eps$ and
\[
    -y'' + \eps^{-2} V(x/\eps) y = k^2 y + f
\]
for $|x| < x_\eps$. Setting $\wt y_\eps(x):= w(x/\eps)$  in the ``fast variable'' domain~$|x|<x_\eps$, we see that $w$ must solve the equation
\[
    - w''(\xi) + V(\xi) w(\xi) = \eps^2 k^2 w(\xi) + \eps^2 f(\eps\xi)
\]
for $\xi \in (-\xi_\eps,\xi_\eps)$, see Figure~\ref{cutoff}. A general solution to this equation has the form $w_\eps + u_\eps$, where
\[
    w_\eps = a_-(\eps) f_-(\,\cdot\,,\eps k)+ a_+(\eps) f_+(\,\cdot\,,\eps k),
\]
with $a_\pm(\eps)$ arbitrary complex numbers, is a solution to the homogeneous problem and
\begin{eqnarray}
    &u_\eps(\xi) := \dfrac{\eps^2 f_+(\xi,\eps k)}{D(\eps k)}
            \int_{-\xi_\eps}^\xi f_-(\eta,\eps k) f(\eps \eta)\,\rmd \eta\nonumber \\
             &\lefteqn{
             +\displaystyle
            \frac{\eps^2 f_-(\xi,\eps k)}{D(\eps k)}
            \int_{\xi}^{\xi_\eps} f_+(\eta,\eps k) f(\eps \eta)\,\rmd \eta}\nonumber
\end{eqnarray}
is a particular solution.

\begin{figure}[t]\label{fig:1}
\begin{center}
\includegraphics[scale=0.45]{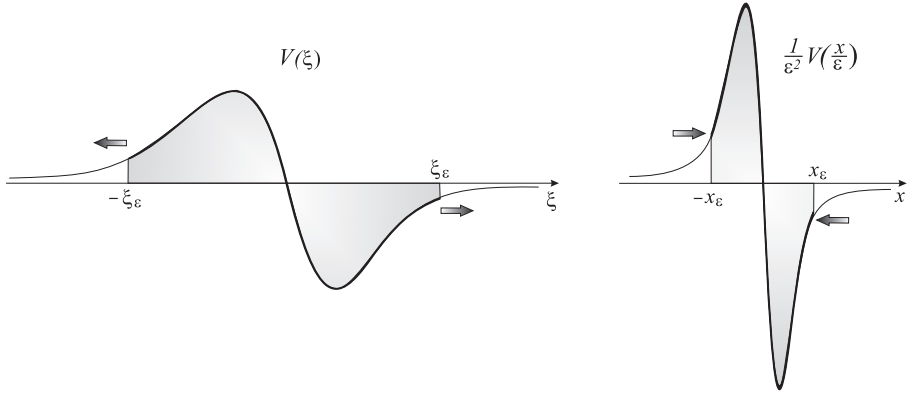}
\end{center}\caption{\label{cutoff} Potentials  in $\xi$ and $x$ coordinates}
\end{figure}

It is therefore natural to take a function
\begin{equation*}\label{eq:zeps}
    z_\eps(x) := y_0(x) [1-\chi_\eps(x)] + [w_\eps(x/\eps)+u_\eps(x/\eps)]\chi_\eps(x)
\end{equation*}
equal to $y_0$ for $|x|> x_\eps$ and to $w_\eps(x/\eps)+u_\eps(x/\eps)$ for $|x|<x_\eps$ as a leading term of the asymptotic expansion of~$y_\eps$ as~$\eps\to0$. The function~$z_\eps$ belongs to $W_2^2$ outside the points $x=\pm x_\eps$ and  has jump discontinuities at these points; namely, denoting by $[g]_a$ the jump of a function $g$ at a point $x = a$, we see that
\begin{equation}
\label{eq:jumpsZ}
  \begin{array}{l}
   \bigl|[z_\eps]_{\pm x_\eps}\bigr|
        =\bigl|y_0(\pm\xp)- w_\eps(\pm\xip)-u_\eps(\pm\xip)\bigr|,\\
      \bigl|[z_\eps']_{\pm x_\eps}\bigr|
        =\bigl|y_0'(\pm\xp)- \eps^{-1} w_\eps'(\pm \xip)-\eps^{-1}u_\eps'(\pm\xip)\bigr|.
  \end{array}
\end{equation}

Our task is to choose the coefficients $a_+(\eps)$ and $a_-(\eps)$ defining~$w_\eps$ in such a way that the values of the right hand sides of~\eqref{eq:jumpsZ}  vanish as $\eps\to0$.
We first show that the particular solution $u_\eps$ does not contribute significantly to these values. Set
$$  \Delta_\eps:=[\xim,\xip],\quad
    \Delta_\eps^-:=[\xim,0], \quad
    \Delta_\eps^+:=[0,\xip].$$

\begin{lem}\label{lem:ueps}
The following holds as $\eps \to 0$:
\begin{eqnarray}\label{eq:UepsEst}
    &&\phantom{\eps^{-1}|}|u_\eps(\pm \xi_\eps)|= O(x_\eps^{1/2})\|f\|,\\
    &&\eps^{-1}\left|u_\eps'(\pm\xi_\eps)\right| = O(x_\eps^{1/2})\|f\|,  \label{eq:U'epsEst}
\end{eqnarray}
and, in addition,
    $\|u_\eps(\,\cdot\,/\eps )\chi_\eps\|= O(x_\eps)\|f\|$.
\end{lem}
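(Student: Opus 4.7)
\medskip
\noindent\textbf{Plan.} I will prove the three bounds by direct Cauchy--Schwarz analysis of the integral representation of~$u_\eps$, running the resonant and non-resonant cases in parallel. They rely on complementary inputs: in the non-resonant case the prefactor $\eps^2/|D(\eps k)|$ is $O(\eps^2)$ (since $D(\eps k)\to D(0)\ne 0$) but the Jost solutions $f_\pm(\,\cdot\,,\eps k)$ are only known to grow linearly across $(\xim,\xip)$ by Proposition~\ref{pro:fpm}; in the resonant case $\eps^2/|D(\eps k)|=O(\eps)$ by~\eqref{eq:limD(ek)/e}, while $|f_\pm(\,\cdot\,,\eps k)|\le C_5$ uniformly on $(\xim,\xip)$ by Lemma~\ref{lem:f+-asymptoticsResonance}.

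\medskip
\noindent At $\xi=\xip$ the second integral in the definition of~$u_\eps$ vanishes, leaving
\begin{equation*}
    u_\eps(\xip) = \frac{\eps^2 f_+(\xip,\eps k)}{D(\eps k)}\int_{\xim}^{\xip} f_-(\eta,\eps k)\, f(\eps\eta)\,\rmd \eta,
\end{equation*}
and $f_+(\xip,\eps k)\to 1$ by Lemma~\ref{lem:f+-asymptotics}. Applying Cauchy--Schwarz in~$\eta$, together with the substitution $x=\eps\eta$ in the $f$-factor, bounds the integral by $\eps^{-1/2}\|f\|\cdot\|f_-(\,\cdot\,,\eps k)\|_{L_2(\xim,\xip)}$, and the last norm is $O(\xi_\eps^{1/2})$ in the resonant case or $O(\xi_\eps^{3/2})$ in the non-resonant case. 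Multiplying together and using $\eps\xi_\eps=x_\eps$ yields $|u_\eps(\xip)|\le c\,x_\eps^{1/2}\|f\|$ in both cases; the estimate at $\xim$ is symmetric. For the derivative, one checks that the two boundary-term contributions cancel in the differentiation of~$u_\eps$, so $u_\eps'$ has the same shape as $u_\eps$ but with $f_\pm'$ in the prefactors; at the endpoints $f_\pm'(\pm\xi_\eps,\eps k)=O(\eps)$ by~\eqref{eq:f'+-xi_eps} (or \eqref{eq:f'+-res}), which exactly absorbs the $\eps^{-1}$ factor of~\eqref{eq:U'epsEst}, and the same Cauchy--Schwarz step delivers the required bound.

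\medskip
\noindent For the $L_2$ bound I propose to estimate $|u_\eps(\xi)|$ pointwise on $(\xim,\xip)$ by applying the same Cauchy--Schwarz scheme to both terms in the defining formula and then invoke
\begin{equation*}
 \|u_\eps(\,\cdot\,/\eps)\chi_\eps\|^2=\eps\int_{\xim}^{\xip}|u_\eps(\xi)|^2\,\rmd\xi.
\end{equation*}
The main obstacle is the non-resonant case: the linear-growth bound of Proposition~\ref{pro:fpm}, applied uniformly across $(\xim,\xip)$, loses powers of $\xi_\eps$ and yields an unusable estimate. The remedy is to split the integration at $\xi=0$ and to exploit the fact that at every point exactly one of $f_+(\,\cdot\,,\eps k)$ and $f_-(\,\cdot\,,\eps k)$ is uniformly bounded, with linear growth entering on the ``wrong side'' only. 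With this refinement both cases give $\|u_\eps(\,\cdot\,/\eps)\chi_\eps\|\le c\,x_\eps\|f\|$, with some room to spare in the non-resonant case.
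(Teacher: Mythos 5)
Your proposal is correct and follows essentially the same line of argument as the paper's proof: a Cauchy--Schwarz estimate on the integral representation of~$u_\eps$, run in parallel in the two cases with the complementary inputs ($\eps^2/|D(\eps k)|=O(\eps^2)$ but $f_\pm$ of linear growth in the non-resonant case, $\eps^2/|D(\eps k)|=O(\eps)$ but $f_\pm$ uniformly bounded in the resonant case), the cancellation of boundary terms in $u_\eps'$ so that $f'_\pm(\pm\xi_\eps,\eps k)=O(\eps)$ neutralizes the extra $\eps^{-1}$, and---crucially---the observation that in the non-resonant case one must split the interval at the origin and use that at each point exactly one of $f_+,f_-$ is $O(1)$ while the other is $O(\xi_\eps)$. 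The paper builds this split into the very first pointwise estimates on $\max_{\Delta_\eps^\pm}|u_\eps|$ (distinguishing $\max_{\Delta_\eps^+}|f_+|$ from $\max_{\Delta_\eps}|f_+|$, etc.) and then gets the $L_2$ bound simply by multiplying the resulting uniform estimate $O(x_\eps^{1/2})\|f\|$ by the interval length $2x_\eps$; your plan is the same except that you apply Cauchy--Schwarz directly to the product $f_\mp\cdot f(\eps\,\cdot\,)$ rather than taking $\max|f_\mp|$ and then Cauchy--Schwarz on $\int|f(\eps\eta)|\,\rmd\eta$, which is a cosmetic difference yielding identical orders.
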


\begin{proof}
First, observe that
\begin{eqnarray}\nonumber
    &&\max_{\Delta_\eps^+}|u_\eps|
    \leq
    \frac{2\eps^2}{|D(\eps k)|}
    \max_{\Delta_\eps^+}|f_+(\,\cdot\,,\eps k)|\,\max_{\Delta_\eps}|f_-(\,\cdot\,,\eps k)|
    \int_{\xim}^{\xi_\eps} |f(\eps\eta)|\,\rmd\eta,   \\\nonumber
     &&\max_{\Delta_\eps^-}|u_\eps|\leq
    \frac{2\eps^2}{|D(\eps k)|}
    \max_{\Delta_\eps}|f_+(\,\cdot\,,\eps k)|\,\max_{\Delta_\eps^-}|f_-(\,\cdot\,,\eps k)|
    \int_{\xim}^{\xip} |f(\eps\eta)|\,\rmd\eta
\end{eqnarray}
and that
  \begin{equation}\label{eq:intf(epstau)}
  \int_{\xim}^{\xip} |f(\eps\eta)|\,\rmd\eta\leq  \sqrt{2}\,\eps^{-1/2}\xip^{1/2}\|f\|
  \end{equation}
by the Cauchy--Bunyakowski inequality.

In the non-resonant case, $D(0)\neq 0$, and by virtue of Proposition~\ref{pro:fpm} we have
\[  
    \max_{\Delta_\eps}|u_\eps|
    \leq c_1 \eps^{3/2}(1+\xip)\,\xip^{1/2}\|f\|\leq 2c_1 \xp^{3/2}\|f\|
\]  
for all sufficiently small $\eps$ and some $c_1>0$ independent of~$\eps$.
In the resonant case, we use \eqref{eq:limD(ek)/e} and the fact that, by Lemma~\ref{lem:f+-asymptoticsResonance}, the Jost solutions $f_\pm(\,\cdot\,,\eps k)$ remain bounded over $\Delta_\eps$ uniformly in~$\eps$  to find $c_2$ independent of $\eps$ such that
\[  
    \max_{\Delta_\eps}|u_\eps|\leq c_2 \xp^{1/2}\|f\|.
\]  
Since the last inequality holds also in the non-resonant case for small enough~$\eps$, we readily deduce \eqref{eq:UepsEst} and estimate the $L_2$-norm of  $u_\eps(\,\cdot\,/\eps)\chi_\eps$ via
\[
    \int_{-x_\eps}^{x_\eps} |u_\eps(x/\eps)|^2\,\rmd x =
        \eps \int_{-\xi_\eps}^{\xi_\eps} |u_\eps(\xi)|^2\,\rmd\xi
                 \le  2c_2^2\,\eps x_\eps \xip \,\|f\|^2 = O(x^2_\eps)\|f\|^2
\]
as $\eps\to0$.

Next we compute the derivative
\[
    u_\eps'(\xi_\eps) = \frac{\eps^2 f_+'(\xip,\eps k)}{D(\eps k)}
            \int_{\xim}^{\xip} f_-(\eta,\eps k) f(\eps \eta)\,\rmd \eta
\]
and obtain
\begin{eqnarray}\nonumber
    &\displaystyle\eps^{-1}|u_\eps'(\xi_\eps)|\le \frac{\eps}{|D(\eps k)|}\,|f_+'(\xip,\eps k)|\,\max_{\Delta_\eps}|f_-(\xi,\eps k)|\int_{\xim}^{\xip} |f(\eps\eta)|\,\rmd\eta&\\\nonumber
    &\lefteqn{\le  \frac{c_3 |k|\eps\xp^{1/2}}{|D(\eps k)|}\,\max_{\Delta_\eps}|f_-(\xi,\eps k)|\,\|f\|}&
\end{eqnarray}
due to \eqref{eq:f'+-xi_eps} and \eqref{eq:intf(epstau)}. Applying Proposition~\ref{pro:fpm}
in the non-resonant case and Lemma~\ref{lem:f+-asymptoticsResonance} along with \eqref{eq:limD(ek)/e} in the resonant one, we find that
\[
    \eps^{-1}|u_\eps'(\xi_\eps^{1/2})|\le c_4\xp\|f\|.
\]
Similar considerations yield the estimate for $u_\eps'(\xim)$.
\end{proof}

Next we need to choose the coefficients  $a_\pm(\eps)$ in a proper way. This will be done separately in the non-resonant and resonant cases.

\subsection{A special solution in the non-resonant case}\label{ssec:aux.nres}
Observing that $y_0(\pm x_\eps) \to 0$ and that $y'_0(\pm x_\eps)\to y'_0(0\pm)$ as $\eps \to0$ and recalling that the values $|f'_\pm(\ximp,\eps k)|$ converge to the positive number $|D(0)|$ by \eqref{f'-+xi_epsD0}, one concludes that a suitable choice of $a_\pm(\eps)$ might be
\[
    a_-(\eps) :=  \eps y_0'(x_\eps)/f'_-(\xi_\eps,\eps k),\qquad
    a_+(\eps) :=  \eps y_0'(-x_\eps)/f'_+(-\xi_\eps,\eps k).
\]
And indeed, the relations of Proposition~\ref{pro:fpm} and Lemma~\ref{lem:f+-asymptotics}  yield then the asymptotics
\begin{eqnarray}\nonumber
    &\phantom{\eps^{-1}}w_\eps(\xipm)
\displaystyle =\frac{\eps f_-(\pm \xi_\eps,\eps k)}{f'_-(\xi_\eps,\eps k)}\, y_0'(x_\eps)
                    + \frac{\eps f_+(\pm \xi_\eps,\eps k)}{f'_+(-\xi_\eps,\eps k)}\,y_0'(-x_\eps)&
                    \\\nonumber
               &\lefteqn{=O(x_\eps)\cdot[y_0'(x_\eps) + y_0'(-x_\eps)],}&\\\nonumber
    &\eps^{-1} w'_\eps(\xipm)
            \displaystyle =\frac{f'_-(\pm \xi_\eps,\eps k)}{f'_-(\xi_\eps,\eps k)}\, y_0'(x_\eps)
                    + \frac{f'_+(\pm \xi_\eps,\eps k)}{f'_+(-\xi_\eps,\eps k)}\, y_0'(-x_\eps)&\\
            &\lefteqn{ = y_0'(\pm x_\eps) + O(\eps) \cdot y_0'(\mp x_\eps).}&\nonumber
\end{eqnarray}
Since the resolvent of $S_0$ acts continuously from $L_2(\bR)$ into $W_2^2(\bR\setminus\{0\})$, there
exists a constant $c_1$ independent of~$f$ such that
\[
    \|y_0\|_{W_2^2(\bR\setminus\{0\})} \le c_1 \|f\|;
\]
thus the Sobolev embedding theorem yields the estimates
\begin{equation}\label{eq:y0-sobolev}
    \sup_{x}(|y_0(x)|+|y_0'(x)|) \le c_2 \|y_0\|_{W_2^2(\bR\setminus\{0\})} \le c_3 \|f\|
\end{equation}
as well as the relations
\begin{eqnarray}\nonumber
    &&|y_0(\pm x_\eps)|
        = \Bigl|\int_0^{\pm x_\eps} y'_0(t)\,\rmd t\Bigr| \le x_\eps^{1/2}\|y'_0\|
        \le c_4 x_\eps^{1/2} \|f\|,\\\nonumber
     && |y_0'(\pm x_\eps)-y_0'(0\pm)|
        = \Bigl|\int_0^{\pm x_\eps} y''_0(t)\,\rmd t\Bigr| \le x_\eps^{1/2}\|y''_0\|
        \le c_5 x_\eps^{1/2} \|f\|.
\end{eqnarray}

Returning now to formulae~\eqref{eq:jumpsZ} and combining the above estimates with Lemma~\ref{lem:ueps}, we get
\begin{eqnarray}\nonumber
    &&\bigl| [z_\eps]_{\pm x_\eps}\bigr|
                    \le |y_0(\pm x_\eps)| + |w_\eps( \xipm)|+ |u_\eps( \xipm)|=  O(x_\eps^{1/2}) \cdot \|f\|,\\\nonumber
    &&\bigl| [z'_\eps]_{\pm x_\eps}\bigr|  \le |y'_0(\pm x_\eps) - \eps^{-1} w'_\eps(\xipm) |+\eps^{-1} |u'_\eps( \xipm) |
        =   O(\xp^{1/2}) \cdot \|f\|.
\end{eqnarray}

We shall also need an estimate on the $L_2$-norm of $w_\eps(\,\cdot\,/\eps)\chi_\eps$.
Since $a_\pm(\eps)=O(\eps)\|f\|$ and
\[
    \int_{-x_\eps}^{x_\eps} |f_{\pm}(x/\eps,\eps k)|^2\,\rmd x
            = \eps \int_{\xim}^{\xip} |f_{\pm}(\xi,\eps k)|^2\,\rmd\xi
            \le c_6 \eps (1+\xip)^3,
\]
we find that, as $\eps\to0$,
\[
    \|w_\eps(\,\cdot\,/\eps)\chi_\eps\| = O(x_\eps^{3/2})\,\|f\|.
\]


\subsection{A special solution in the resonant case}\label{ssec:aux.res}
In the resonant case, the limit operator~$S_0$ is expected to be $S(\theta)$, with $\theta$
of~\eqref{eq:theta}. The main difference with the non-resonant case is that the zero energy Jost solutions $f_-$ and $f_+$ are now linearly dependent and thus follow quite a different asymptotics.

A possible choice for $a_\pm(\eps)$ could be the one resulting in the equalities
\[
w_\eps(\xim)=y_0(-x_\eps),\qquad \eps^{-1}w'_\eps(\xim)=y_0'(-x_\eps).
\]
However, we have found it more convenient to take the limiting values $a_\pm(0)$ instead of such~$a_\pm(\eps)$ for all $\eps>0$, i.e., to set
\begin{eqnarray}\nonumber
    &&a_-(\eps) \equiv  \frac{-1}{\rmi k(\theta+\theta^{-1})}\,
            \bigl[y'_0(0-)\theta^{-1} - \rmi k \theta y_0(0-)\bigr], \\\nonumber
    &&a_+(\eps) \equiv \frac{1}{\rmi k(\theta + \theta^{-1})}\,
            \bigl[\rmi k y_0(0-) + y'_0(0-)\bigr].
\end{eqnarray}
Then, as $\eps\to0$,
\begin{eqnarray}\nonumber
    &&w_\eps(-\xi_\eps)  = y_0(0-)
        \frac{ \theta f_-(-\xi_\eps,\eps k) + f_+(-\xi_\eps,\eps k)}{\theta+\theta^{-1}}\\\nonumber
        && \qquad\qquad+ y'_0(0-)
        \frac{-\theta^{-1} f_-(-\xi_\eps,\eps k) + f_+(-\xi_\eps,\eps k)}
            {\rmi k(\theta+\theta^{-1})}\\\nonumber
        &&\qquad\qquad=y_0(0-)(1+ o(1)) + y'_0(0-)\cdot o(1) \\\nonumber
        &&\qquad\qquad= y_0(-x_\eps) + o(1)\|f\|
\end{eqnarray}
and, similarly,
\begin{eqnarray}\nonumber
    &&\eps^{-1}w'_\eps(-\xi_\eps)  = y_0(0-)
        \frac{ \theta f'_-(-\xi_\eps,\eps k) + f'_+(-\xi_\eps,\eps k)}{\eps(\theta+\theta^{-1})}\\\nonumber
        &&\qquad\qquad\quad + y'_0(0-)
        \frac{-\theta^{-1} f'_-(-\xi_\eps,\eps k) + f'_+(-\xi_\eps,\eps k)}
            {\rmi \eps k(\theta+\theta^{-1})}\\\nonumber
        &&\qquad\qquad\quad= y_0(0-)\cdot o(1) + y'_0(0-)(1+ o(1)) \\\nonumber
        &&\qquad\qquad\quad= y'_0(-x_\eps) + o(1)\|f\|.
\end{eqnarray}
Next, recalling the relations  $y_0(0-)=\theta^{-1} y_0(0+)$ and  $y'_0(0-) = \theta y'_0(0+)$, we see that
\begin{eqnarray}\nonumber
    &&w_\eps(\xi_\eps)  = y_0(0+)
        \frac{ f_-(\xi_\eps,\eps k) + \theta^{-1}f_+(\xi_\eps,\eps k)}{\theta+\theta^{-1}}\\\nonumber
        &&\qquad\quad + y'_0(0+)
        \frac{- f_-(\xi_\eps,\eps k) + \theta f_+(\xi_\eps,\eps k)}
            {\rmi k(\theta+\theta^{-1})}\\\nonumber
        &&\qquad\quad =y_0(0+)(1+ o(1)) + y'_0(0+)\cdot o(1) \\\nonumber
        &&\qquad\quad = y_0(x_\eps) + o(1)\|f\|
\end{eqnarray}
and
\begin{eqnarray}\nonumber
    &&\eps^{-1}w'_\eps(\xi_\eps)  = y_0(0+)
        \frac{ f'_-(\xi_\eps,\eps k) + \theta^{-1}f'_+(\xi_\eps,\eps k)}{\eps(\theta+\theta^{-1})}\\\nonumber
        &&\qquad\qquad\; + y'_0(0+)
        \frac{- f'_-(\xi_\eps,\eps k) + \theta f'_+(\xi_\eps,\eps k)}
            {\rmi \eps k(\theta+\theta^{-1})}\\\nonumber
        &&\qquad\qquad\; = y_0(0+)\cdot o(1) + y'_0(0+)(1+ o(1)) \\\nonumber
        &&\qquad\qquad\; = y'_0(x_\eps) + o(1)\cdot\|f\|.
\end{eqnarray}
Combining the above relations, we conclude that the jumps of the function $z_\eps$ and its derivative at the points $\pm x_\eps$ are small, namely that
\begin{eqnarray}\nonumber
     &&[z_\eps]_{\pm x_\eps}  = o(1) \cdot \|f\|,\\\nonumber
     &&[z'_\eps]_{\pm x_\eps}  = o(1) \cdot \|f\|
\end{eqnarray}
as $\eps \to 0$.

Finally, using the fact that by Lemma~\ref{lem:f+-asymptoticsResonance} the Jost solutions $f_{\pm}(\,\cdot\,,\eps k)$ remain uniformly bounded over $(-\xi_\eps,\xi_\eps)$ as $\eps\to0$, we arrive at the estimate
\[
    \int_{-x_\eps}^{x_\eps} |f_\pm(x/\eps,\eps k)|^2\,\rmd x
            = \eps \int_{-\xi_\eps}^{\xi_\eps} |f_\pm(\xi,\eps k)|^2\,\rmd\xi
            \le c_7 \eps  \xi_\eps = c_7 x_\eps
\]
for all small $\eps$. Observing that $a_\pm(\eps)=O(1)\|f\|$ as $\eps\to0$, we get the estimate
\[
    \|w_\eps(\,\cdot\,/\eps)\chi_\eps\| = O(x_\eps)\,\|f\|.
\]


\subsection{Convergence of $\widetilde S_\eps$}\label{ssec:aux.conv}

We are now in a position to prove the main result of this section.

\begin{thm}\label{thm:conv-tildeS}
Denote by $S_0$ the operator $S_-\oplus S_+$ in the non-resonant case and the operator $S(\theta)$ with
$\theta:= f_-/f_+$ in the resonant case. Then the operator family $\widetilde S_\eps$ converges in the norm resolvent sense as $\eps\to 0$
to~$S_0$.
\end{thm}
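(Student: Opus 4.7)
The plan is to follow the scheme outlined in Subsection~3.1. Fix a nonreal $k^2$ and $f\in L_2(\bR)$, set $y_0:=(S_0-k^2)^{-1}f$ and $\wt y_\eps:=(\wt S_\eps-k^2)^{-1}f$, and produce $y_\eps\in\dom\wt S_\eps$ such that $q_\eps:=y_\eps-y_0$ and $r_\eps:=(\wt S_\eps-k^2)y_\eps-f$ satisfy $\|q_\eps\|+\|r_\eps\|\le\alpha_\eps\|f\|$ with $\alpha_\eps\to 0$ independent of~$f$. Combined with the identity $\wt y_\eps-y_0=q_\eps-(\wt S_\eps-k^2)^{-1}r_\eps$ and the a priori bound $\|(\wt S_\eps-k^2)^{-1}\|\le|\myIm k^2|^{-1}$, this immediately yields norm resolvent convergence on $\bC\setminus\bR$.

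The natural candidate is the function $z_\eps$ of Subsection~3.1: by construction $(\wt S_\eps-k^2)z_\eps=f$ pointwise on $\bR\setminus\{\pm\xp\}$, and $z_\eps$ fails to belong to $\dom\wt S_\eps$ only because of the jumps $[z_\eps]_{\pm\xp}$ and $[z_\eps']_{\pm\xp}$; by Subsections~3.2--3.3 each of these is bounded by $\eta_\eps\|f\|$, with $\eta_\eps=O(\xp^{1/2})$ in the non-resonant case and $\eta_\eps=o(1)$ in the resonant case. I would absorb the jumps by setting $y_\eps:=z_\eps+\phi_\eps$, where $\phi_\eps$ is a piecewise cubic Hermite corrector supported in two collars $[\xp,\xp+h_\eps]\cup[-\xp-h_\eps,-\xp]$ lying strictly outside $\supp\chi_\eps$, determined on the right-hand collar by the endpoint conditions
\[
    \phi_\eps(\xp)=-[z_\eps]_{\xp},\quad \phi_\eps'(\xp)=-[z_\eps']_{\xp},\quad \phi_\eps(\xp+h_\eps)=\phi_\eps'(\xp+h_\eps)=0,
\]
and symmetrically on the left. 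A direct check shows that the jumps of $z_\eps$ are cancelled, that $y_\eps$ lies in $\dom\wt S_\eps$, and that $r_\eps$ vanishes outside the two collars; on each collar the scaled potential is zero, hence $r_\eps=-\phi_\eps''-k^2\phi_\eps$ there.

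The remaining verification is quantitative. Standard Hermite bounds yield $\|\phi_\eps\|^2\le c\,h_\eps\eta_\eps^2\|f\|^2$ and $\|\phi_\eps''\|^2\le c\,h_\eps^{-3}\eta_\eps^2\|f\|^2$, so $\|r_\eps\|=o(1)\|f\|$ follows from any choice of $h_\eps\to 0$ with $\eta_\eps^2/h_\eps^3\to 0$; for instance $h_\eps:=\max(\xp^{1/4},\eta_\eps^{1/2})$ works in both cases. For $q_\eps$, decompose it as $(z_\eps-y_0)+\phi_\eps$: the first summand is supported in $[-\xp,\xp]$ and has $L_2$-norm $O(\xp^{1/2})\|f\|$ in view of the $L_2$-estimates on $w_\eps(\,\cdot\,/\eps)\chi_\eps$ and $u_\eps(\,\cdot\,/\eps)\chi_\eps$ from Subsections~3.2--3.3 and Lemma~\ref{lem:ueps} (together with the pointwise bound~\eqref{eq:y0-sobolev} on $y_0$), while the Hermite bound gives $\|\phi_\eps\|=O(h_\eps^{1/2}\eta_\eps)\|f\|=o(1)\|f\|$.

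The hardest part is the resonant case, where the jumps $[z_\eps]_{\pm\xp}$ and $[z_\eps']_{\pm\xp}$ are merely $o(1)\|f\|$ with no explicit rate, so the window $h_\eps$ must be tuned to the sequence $\eta_\eps$ in a way that simultaneously controls $\|q_\eps\|$ and $\|r_\eps\|$. The underlying control depends on the uniform boundedness of $f_\pm(\,\cdot\,,\eps k)$ on $[-\xi_\eps,\xi_\eps]$ guaranteed by Lemma~\ref{lem:f+-asymptoticsResonance} and on the differentiability of $D$ at $k=0$ encoded in~\eqref{eq:limD(ek)/e}; the final book-keeping reduces to verifying that every $o(1)$ in Subsections~3.2--3.3 is genuinely uniform in $f$ with $\|f\|=1$, which involves only careful tracking of $\|f\|$ through the Cauchy--Bunyakowski estimates already used there.
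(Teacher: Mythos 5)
Your construction is essentially the paper's: take $z_\eps$ from Subsection~3.1, add a corrector supported outside $\supp\chi_\eps$ to absorb the jumps at $\pm x_\eps$, and then verify $\|q_\eps\|+\|r_\eps\|=o(\|f\|)$ uniformly. The one place you diverge is in using shrinking Hermite collars of width $h_\eps\to0$, which forces you to tune $h_\eps$ against the jump size $\eta_\eps$ and prompts your concern that in the resonant case $\eta_\eps$ is merely $o(1)$ with no explicit rate. This extra optimization is unnecessary: the paper uses a fixed pair of compactly supported profile functions $\varphi,\psi$ with prescribed jumps at the origin, rescaled only in amplitude (not in width) by the jumps of $z_\eps$. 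With a fixed support, the sup-norm of the corrector and its first two derivatives is directly of order $\eta_\eps\|f\|$, and hence so are the $L_2$-norms of $\phi_\eps$ and $-\phi_\eps''-k^2\phi_\eps$; no balancing of $h_\eps$ against $\eta_\eps^{2/3}$ is needed, and the ``no explicit rate'' issue evaporates. Your version is correct — your choice $h_\eps=\max(x_\eps^{1/4},\eta_\eps^{1/2})$ does give $\eta_\eps^2/h_\eps^3\le\eta_\eps^{1/2}\to0$ — but the fixed-width corrector gives the same conclusion with less bookkeeping. Everything else (the decomposition $q_\eps=(z_\eps-y_0)+\phi_\eps$, the use of Lemma~\ref{lem:ueps}, the $L_2$-bounds on $w_\eps(\,\cdot\,/\eps)\chi_\eps$, and the Sobolev bound~\eqref{eq:y0-sobolev}) matches the paper.
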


\begin{proof}
Fix $k^2 \in \mathbb{C}\setminus\mathbb{R}$, take an arbitrary  $f \in L_2(\bR)$, and set $y_0:=(S_0 - k^2)^{-1}f$. As the
first step, we construct a function $q_\eps\in L_2(\bR)$ with $\norm{q_\eps} = o(\norm{f})$ as $\eps\to0$
such that the function $y_\eps:= y_0 + q_\eps$ belongs to $\dom \widetilde S_\eps$ and, as $\eps\to0$, the
relation
\begin{equation*}\label{SepsEst}
    \bigl\|(\widetilde S_\eps-k^2)y_\eps-f\bigr\| = o(\norm{f})
\end{equation*}
holds. We then show that this yields the required norm resolvent convergence.

\begin{figure}[ht]\label{fig:2}
\begin{center}
\includegraphics[scale=1]{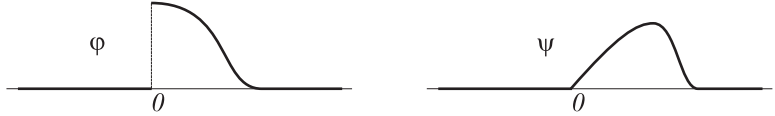}
\end{center}\caption{\label{Jump} Functions with the prescribed jumps at the origin}
\end{figure}

To construct such a $y_\eps$, we shall modify $z_\eps$ at the points $\pm x_\eps$ to eliminate the jumps.
To this end let us introduce functions $\varphi$ and $\psi$ as in Figure~\ref{Jump} that are smooth
outside the origin, have compact supports contained in $[0,\infty)$, and have the prescribed jumps
$[\varphi]_0=1$, $[\varphi']_0=0$ and $[\psi]_0=0$, $[\psi']_0=1$. Set
\begin{eqnarray}\nonumber
    &&\zeta_\eps(x) = [z_\eps]_{-x_\eps}\,\varphi(-x-x_\eps)
                -[z'_\eps]_{-x_\eps}\,\psi(-x-x_\eps)\\\label{Zeta}
        &&\qquad\qquad\qquad-[z_\eps]_{x_\eps}\,
            \varphi(x-x_\eps)-[z'_\eps]_{x_\eps}\,\psi(x-x_\eps);\nonumber
\end{eqnarray}
then $\zeta_\eps =0$ on~$(-x_\eps,x_\eps)$ and, in view of the estimates on~$[z_\eps]_{\pm x_\eps}$ and
$[z'_\eps]_{\pm x_\eps}$ of Subsections~\ref{ssec:aux.nres} and \ref{ssec:aux.res},
\begin{equation}\label{ZetaEstim}
    \max_{|x|>x_\eps}\left|\zeta^{(k)}_\eps(x)\right| =   o(\|f\|)
\end{equation}
as $\eps\to0$ for $k=0,1,2$.

Clearly, the function $y_\eps:= z_\eps+\zeta_\eps$ is continuous on~$\bR$ along with its derivative and
belongs to $W_2^2(\bR) = \dom S_\eps$. Observe that $y_\eps=y_0$ for $|x|$ large enough; more exactly, we
have
\begin{equation*}\label{yeps}
    y_\eps =  y_0 +q_\eps=(S_0-k^2)^{-1}f+q_\eps
\end{equation*}
with
\[
    q_\eps:=[w_\eps(\,\cdot\,/\eps) + u_\eps(\,\cdot\,/\eps) - y_0]\chi_\eps + \zeta_\eps
\]
of compact support. Taking into account Lemma~\ref{lem:ueps}, the estimates on $\|w_\eps(\,\cdot\,/\eps)\chi_\eps\|$ derived above, and relations \eqref{eq:y0-sobolev} and~\eqref{ZetaEstim}, we see that the $L_2$-norm of the function~$q_\eps$ is~$o(\|f\|)$.

Next we calculate the function~$(\wt S_\eps-k^2) y_\eps$ for $|x|< x_\eps$ and $|x|>x_\eps$. If
$|x|<x_\eps$, then $\zeta_\eps(x)=0$, whence $y_\eps(x)=z_\eps(x)$ and
\[
 (\widetilde S_\eps-k^2)y_\eps(x)
    = \eps^{-2} \bigl[-z_\eps''(\tfrac{x}{\eps})
            +V(\tfrac{x}{\eps})z_\eps(\tfrac{x}{\eps})\bigr] - k^2 z_\eps(\tfrac{x}{\eps})\\
    = f(x).
\]
If $|x|>x_\eps$, then $y_\eps =  y_0 +\zeta_\eps$, and hence
\begin{eqnarray}\nonumber
     &&(\widetilde S_\eps-k^2)y_\eps(x)
        = \left(-\tfrac{\rmd^2}{\rmd x^2} - k^2\right)(y_0 + \zeta_\eps)(x)\\
        &&\qquad\qquad\qquad\qquad= f(x) - \zeta''_\eps(x) - k^2\zeta_\eps(x).\nonumber
\end{eqnarray}
Therefore, $(\widetilde S_\eps-k^2)y_\eps= f + r_\eps$ with the remainder
\begin{equation*}\label{RepsNonRes}
 r_\eps(x) = -\zeta''_\eps(x)- k^2\zeta_\eps(x).
\end{equation*}
Using~\eqref{ZetaEstim}, we find that, as $\eps\to0$,
\[
    \|r_\eps\| = o(\|f\|).
\]

Finally, since $\|(\wt S_\eps-k^2)^{-1}\| \le |\myIm k^2|^{-1}$, it follows that
\begin{eqnarray}\nonumber
   && \|(\widetilde S_\eps-k^2)^{-1}f-(S_0 -k^2)^{-1}f\|
        = \|\wt y_\eps - y_0\|\\\nonumber
         &&\qquad\qquad\qquad\qquad\leq  \|q_\eps\|+\|(\wt S_\eps-k^2)^{-1}r_\eps\|
         = o(\|f\|)
\end{eqnarray}
as $\eps\to0$, and the proof is complete.
\end{proof}

\section{Jost solutions and scattering coefficients for the operator~$\wt S_\eps$}\label{sec:Jost}

Our next task is to show that the resolvents of the operators $\wt S_\eps$ and $S_\eps$ get closer as $\eps$ tends to zero. To do this, we shall need more information on the resolvent of the operator~$\wt S_\eps$.

We denote by $\widetilde f_\pm(\,\cdot\,,\eps,k)$ the Jost solutions of the operator~$\wt S_\eps$; then the
resolvent $\wt R_\eps(k):=(\wt S_\eps - k^2)^{-1}$ is an integral operator with kernel equal to the Green
function $\wt G_\eps(x,y,k)$,
\[
    \wt G_\eps(x,y,k) = \frac1{\wt D_\eps(k)}
    \left\{
      \begin{array}{ll}
        \wt f_+(x,\eps,k) \wt f_-(y, \eps,k), & x > y, \\
        \wt f_-(x,\eps,k) \wt f_+(y, \eps,k), & x < y,
      \end{array}
    \right.
\]
where $\wt D_\eps(k)$ is the Wronskian of the Jost solutions $\wt f_+(\,\cdot\,,\eps,k)$ and $\wt
f_-(\,\cdot\,,\eps,k)$, i.e.,
\[
    \wt D_\eps(k) =   {\wt f}_+(\,\cdot\,,\eps,k) \wt f_- ^{\,\prime}(\,\cdot\,,\eps,k)
                      - \wt f_+^{\,\prime} (\,\cdot\,,\eps,k) \wt f_-(\,\cdot\,,\eps,k).
\]


In this section, we shall construct the Jost solutions $\wt f_\pm(\,\cdot\,,\eps,k)$ of the operator~$\wt
S_\eps$ for a fixed $k\in\bC$ with $\myIm k >0$ and $\myRe k>0$, and then study their behaviour as
$\eps\to0$. The analysis of the right and left Jost solutions is quite similar, and only the case of
$\widetilde f_+$ will be investigated in detail. As a by-product, we establish some estimates on $\wt f_\pm(\,\cdot\,,\eps,k)$ that will essentially be used in the next section to prove the main result.

\subsection{Jost solutions: a general construction}
Since the potential of $\wt S_\eps$ vanishes for $|x|>x_\eps$, we consider separately three regions:
$x>x_\eps$, $|x|<x_\eps$, and $x<-x_\eps$.

\textbf{Region 1:} for $x>x_\eps$ we clearly have
\[
    \widetilde f_+(x,\eps,k) = \rme^{\rmi kx}.
\]

\textbf{Region 2:} for $|x|<x_\eps$ the Jost solution satisfies the equation
\[
    -y'' + \eps^{-2}V(x/\eps)y =  k^2 y
\]
and thus is equal to
\begin{equation*}\label{eq:prox.reg2}
    \widetilde f_+(x,\eps,k) = c_\eps^+ f_+(x/\eps,\eps k) + c_\eps^- f_-(x/\eps,\eps k),
\end{equation*}
where $f_\pm$ are the Jost solutions of~\eqref{eq:Jost.de}, i.e., of the Schr\"odinger operator~$S_1$. Continuity of $\widetilde f_+$ and its derivative at the point~$x=x_\eps$ results in the system
\[\left\{
    \begin{array}{l}
       c_\eps^+ f_+(\xi_\eps,\eps k)  + c_\eps^- f_-(\xi_\eps,\eps k)  = \rme^{\rmi kx_\eps},\\
       c_\eps^+ f'_+(\xi_\eps,\eps k) + c_\eps^- f'_-(\xi_\eps,\eps k) = \rmi\eps k\rme^{\rmi
            kx_\eps}.
    \end{array}
  \right.
 \]
Recall that
\[
    D(k) := f_+(\xi,k)f'_-(\xi,k)-f'_+(\xi,k)f_-(\xi,k)
\]
is the Wronskian of the Jost solutions~$f_+$ and $f_-$ and observe that $D(\eps k)\ne0$ for every positive~$\eps$ since otherwise $f_+(\,\cdot\,,\eps k)$ and $f_-(\,\cdot\,,\eps k)$ would be linearly dependent and thus $\eps^2k^2 \in \bC^+$ would be an eigenvalue of the self-adjoint operator~$S_1$. Solving the above system for $c_\eps^\pm$, one gets
\begin{equation*}\label{eq:prox.cpm}
  \begin{array}{l}
      c_\eps^+ = \dfrac{\rme^{\rmi kx_\eps}}{D(\eps k)}\,
                    \biggl[\, f'_-(\xi_\eps,\eps k)-\rmi \eps k f_-(\xi_\eps,\eps k) \biggr];\\
   c_\eps^- = \dfrac{\rme^{\rmi kx_\eps}}{D(\eps k)}\,\,
                    \biggl[\,\rmi \eps k f_+(\xi_\eps,\eps k)- f'_+(\xi_\eps,\eps k) \biggr].
  \end{array}
\end{equation*}

In the non-resonant case, $D(0)\ne0$, and thus $|D(\eps k)|\ge |D(0)|/2$ for all $\eps$ small enough by the continuity of~$D$. Then $c_\eps^- \to0$ by~\eqref{eq:f+'0}, while $c_\eps^+\to1$ by Lemma~\ref{lem:f+-asymptotics}.
In the resonant case, we combine this lemma with  relation~\eqref{eq:limD(ek)/e} and get the same convergence $c_\eps^-\to0$ and $c_\eps^+\to1$ as $\eps\to0$.

\textbf{Region 3:} for $x<-x_\eps$,
\begin{equation*}\label{eq:prox.reg3}
    \widetilde f_+(x,\eps,k) = a_\eps^+\rme^{\rmi kx} + b_\eps^+ \rme^{-\rmi kx}
\end{equation*}
for some coefficients $a_\eps^+$ and $b_\eps^+$. Continuity of the Jost solution $\wt f_+$ and its
derivative at $x=-x_\eps$ gives the linear system for $a_\eps^+$ and $b_\eps^+$:
\[\left\{
    \begin{array}{l}
      \phantom{\rmi k(i} a_\eps^+ \rme^{-\rmi kx_\eps}  + b_\eps^+ \rme^{\rmi kx_\eps}  = \wt f_+(-x_\eps+0,\eps,
            k),\\
            \rmi k(a_\eps^+ \rme^{-\rmi kx_\eps}  - b_\eps^+ \rme^{\rmi kx_\eps})
                    = {\wt f_+}^{\,\prime}(-x_\eps+0,\eps, k),
    \end{array}
  \right.
\]
solving which we find that
\begin{eqnarray}\nonumber
    &&a_\eps^+ = \dfrac{\rme^{\rmi kx_\eps}}{2\rmi k}\,
                    \biggl[\,\rmi k \wt f_+(-x_\eps+0,\eps,k) + \wt f'_+(-x_\eps+0,\eps, k)\biggr];\\
    &&b_\eps^+ = \dfrac{\rme^{-\rmi kx_\eps}}{2\rmi k}\,
                    \biggl[\,\rmi k \wt f_+(-x_\eps+0,\eps,k) - \wt f'_+(-x_\eps+0,\eps, k)\biggr].\nonumber
\end{eqnarray}

\begin{rem}
Calculating the Wronskian $\wt D_\eps(k)$ of the Jost solutions $\wt f_\pm$ at a point $x$ to the left of~$-x_\eps$, one
immediately sees that $\wt D_\eps(k) = - 2\rmi k a_\eps^+$.
\end{rem}

To investigate the behaviour of the coefficients $a_\eps^+$ and $b_\eps^+$, we have to
consider the resonant and non-resonant cases separately.

\subsection{Scattering coefficients in the non-resonant case} First we note that, as $\eps\to0$,
\[
    \wt f_+(-x_\eps+0,\eps,k) = c_\eps^+ f_+(-\xi_\eps,\eps k) + c_\eps^- f_-(-\xi_\eps,\eps k)
        =O(\xi_\eps),
\]
and that, in view of \eqref{f'-+xi_epsD0},
\[
    \eps\wt f'_+(-x_\eps+0,\eps,k) = c_\eps^+ f'_+(-\xi_\eps,\eps k) + c_\eps^- f'_-(-\xi_\eps,\eps k)
        \to - D(0).
\]
It therefore follows that
\[
    \eps a_\eps^+ \to - D(0),
    \qquad
    \eps b_\eps^+ \to  D(0)
\]
as $\eps \to 0$.
The above analysis remains in force for real $k$; therefore, one gets the following result.

\begin{cor}\label{cor:prox.nres}
Assume that the potential~$V$ is non-resonant. Then, for every nonzero $k$ with $\myIm k\ge0$, the
reflection~$\wt r_\eps(k):= b^+_\eps/a^+_\eps$ and transmission~$\wt t_\eps(k):=1/a^+_\eps$ coefficients of the
Schr\"odinger operator $\wt S_\eps$ satisfy the asymptotic relations $\wt r_\eps(k)\to-1$ and $\wt
t_\eps(k)\to 0$ as $\eps\to0$.
\end{cor}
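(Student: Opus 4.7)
The plan is to extract the corollary as a direct consequence of the asymptotic identities for $a_\eps^+$ and $b_\eps^+$ derived immediately before the statement, plus the observation that the entire construction of $\widetilde f_+$ in Regions 1--3 and the limiting arguments for $c_\eps^\pm$ and $\widetilde f_+(-x_\eps+0,\eps,k)$, $\widetilde f_+^{\,\prime}(-x_\eps+0,\eps,k)$ used Proposition~\ref{pro:fpm} and Lemma~\ref{lem:f+-asymptotics}, both of which are valid for all $k\in\overline{\bC^+}$, including nonzero real $k$. Thus the key relations
\[
   \eps a_\eps^+ \to -D(0), \qquad \eps b_\eps^+ \to D(0)
\]
remain in force as $\eps\to0$ for any nonzero $k$ with $\myIm k\ge 0$.

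Next, in the non-resonant case one has $D(0)\ne 0$ by definition, so there exists $\eps_0>0$ such that
\[
   |\eps a_\eps^+| \ge \tfrac12|D(0)|>0 \qquad \text{for all } \eps\in(0,\eps_0).
\]
In particular $a_\eps^+\ne 0$ for small $\eps$, so both $\widetilde r_\eps(k)=b_\eps^+/a_\eps^+$ and $\widetilde t_\eps(k)=1/a_\eps^+$ are well defined. Multiplying numerator and denominator by $\eps$ gives
\[
   \widetilde r_\eps(k) = \frac{\eps b_\eps^+}{\eps a_\eps^+},
   \qquad
   \widetilde t_\eps(k) = \frac{\eps}{\eps a_\eps^+},
\]
and the two convergences above yield, by continuity of division away from zero,
\[
   \widetilde r_\eps(k) \to \frac{D(0)}{-D(0)}=-1,
   \qquad
   \widetilde t_\eps(k)\to \frac{0}{-D(0)}=0,
\]
as $\eps\to 0$, which is exactly the claim.

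There is no real obstacle here: everything is algebraic once the two limit identities $\eps a_\eps^+\to-D(0)$ and $\eps b_\eps^+\to D(0)$ are in hand. The only thing that needs a brief mention is that, although the statement of Corollary~\ref{cor:prox.nres} covers real $k>0$ as well (because the scattering coefficients are physically interesting on the real axis), the asymptotics for $c_\eps^\pm$ and hence for $\widetilde f_+(-x_\eps+0,\eps,k)$ and $\eps\widetilde f_+^{\,\prime}(-x_\eps+0,\eps,k)$ follow from Proposition~\ref{pro:fpm} and Lemma~\ref{lem:f+-asymptotics}, both of which are stated uniformly for $k\in\overline{\bC^+}$; so the extension to real nonzero $k$ is automatic, and no additional argument is required.
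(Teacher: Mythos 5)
Your proposal is correct and follows the same route as the paper: read off the limits of $\eps a_\eps^+$ and $\eps b_\eps^+$ derived in the preceding paragraph, use $D(0)\ne 0$ in the non-resonant case to see that $a_\eps^+\ne 0$ for small $\eps$, and then divide. Your remark that the underlying asymptotics (Proposition~\ref{pro:fpm} and Lemma~\ref{lem:f+-asymptotics}) hold uniformly on $\overline{\bC^+}$ is precisely the justification the paper compresses into the phrase ``the above analysis remains in force for real $k$''.
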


This can be compared with the analogous result
in~\cite{GolovatyHryniv:2010,GolovatyManko:2009,Manko:2009} proved for the operator
family~$S_\eps$ in the case where the support of~$V$ is contained in~$[-1,1]$.


\subsection{Scattering coefficients  in the resonant case} As $\eps\to0$, we have
\[
    \wt f_+(-x_\eps+0,\eps,k) = c_\eps^+ f_+(-\xi_\eps,\eps k) + c_\eps^- f_-(-\xi_\eps,\eps k)
        \to \theta^{-1},
\]
by \eqref{eq:f+-xi_eps} and \eqref{eq:f+-res}, while \eqref{eq:f'+-xi_eps}  and \eqref{eq:f'+-res} yield
\[
    \wt f'_+(-x_\eps+0,\eps,k) = c_\eps^+ \eps^{-1}f'_+(-\xi_\eps,\eps k)
        + c_\eps^- \eps^{-1}f'_-(-\xi_\eps,\eps k)
        \to \rmi k \theta.
\]
It therefore follows that, as $\eps \to 0$,
\[
    a_\eps^+ \to \tfrac12[\theta^{-1} + \theta],
    \qquad
    b_\eps^+ \to \tfrac12[\theta^{-1} - \theta].
\]
The above analysis remains in force for real $k$; therefore, one arrives at the following result.

\begin{cor}\label{cor:prox.res}
Assume that the potential~$V$ is resonant and that $\theta$ is given by \eqref{eq:intr.theta} (or, equivalently, by~\eqref{eq:theta}). Then, for every nonzero $k$
with $\myRe k \ge0$, the reflection~$\wt r_\eps(k):= b^+_\eps/a^+_\eps$ and transmission~$\wt
t_\eps(k):=1/a^+_\eps$ coefficients of the Schr\"odinger operator $\wt S_\eps$ satisfy the asymptotic
relations
\[
\wt r_\eps(k)\to \frac{1-\theta^2}{1+\theta^2},\qquad \wt t_\eps(k)\to \frac{2\theta}{1+\theta^2}\quad \mbox{ as }
\eps\to0.
\]

\end{cor}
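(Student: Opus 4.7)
The plan is to read off the stated limits directly from the explicit formulas for $a_\eps^+$ and $b_\eps^+$ derived earlier in this section, once the boundary values $\wt f_+(-x_\eps+0,\eps,k)$ and $\wt f_+^{\,\prime}(-x_\eps+0,\eps,k)$ are computed through the Region~2 representation $\wt f_+ = c_\eps^+ f_+(\,\cdot\,/\eps,\eps k) + c_\eps^- f_-(\,\cdot\,/\eps,\eps k)$ together with the resonant-case Jost asymptotics of Lemma~\ref{lem:f+-asymptoticsResonance}.

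First I would invoke the already established convergences $c_\eps^+\to 1$ and $c_\eps^-\to 0$ (in the resonant case the latter follows by writing $c_\eps^- = \eps^{-1}[\rmi\eps k f_+(\xi_\eps,\eps k) - f'_+(\xi_\eps,\eps k)]\cdot\rme^{\rmi k x_\eps}/(D(\eps k)/\eps)$, using \eqref{eq:f+'0} for the numerator and \eqref{eq:limD(ek)/e} for the denominator). Combining these with the uniform boundedness of $f_\pm(\,\cdot\,,\eps k)$ over $[-\xi_\eps,\xi_\eps]$ from~\eqref{eq:estF-+Resonance} and the limit $f_+(-\xi_\eps,\eps k)\to\theta^{-1}$ from~\eqref{eq:f+-res}, I would obtain $\wt f_+(-x_\eps+0,\eps,k)\to\theta^{-1}$. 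The derivative calculation is analogous: $c_\eps^+\,\eps^{-1}f'_+(-\xi_\eps,\eps k)\to\rmi k\theta$ by~\eqref{eq:f'+-res}, while $c_\eps^-\,\eps^{-1}f'_-(-\xi_\eps,\eps k)\to 0$ because $c_\eps^-\to 0$ and $\eps^{-1}f'_-(-\xi_\eps,\eps k)\to -\rmi k$ by~\eqref{eq:f'+-xi_eps}, yielding $\wt f_+^{\,\prime}(-x_\eps+0,\eps,k)\to\rmi k\theta$.

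Substituting these two limits into the formulas for $a_\eps^+$ and $b_\eps^+$, and using $\rme^{\pm\rmi k x_\eps}\to 1$ since $x_\eps\to 0$, I would obtain
\[
a_\eps^+\to \tfrac12(\theta^{-1}+\theta),\qquad b_\eps^+\to \tfrac12(\theta^{-1}-\theta).
\]
The stated limits for $\wt r_\eps(k) = b_\eps^+/a_\eps^+$ and $\wt t_\eps(k)=1/a_\eps^+$ then follow by multiplying numerator and denominator by $2\theta/(1+\theta^2)$. I do not anticipate any real obstacle: the heavy analytic work has already been carried out in Section~\ref{sec:pre}, and the only subtlety is that the construction in this section was set up for $\myIm k >0$ whereas the corollary needs $k$ with $\myRe k\geq 0$ including real $k$. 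This extension is immediate because the estimates in Proposition~\ref{pro:fpm} and the proofs of Lemmas~\ref{lem:f+-asymptotics} and~\ref{lem:f+-asymptoticsResonance} remain valid for real $k$ (where $|\rme^{\rmi k x}|=1$), so the corollary reduces to book-keeping on the already proved asymptotic relations.
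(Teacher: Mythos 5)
Your proposal is correct and follows essentially the same route as the paper: compute the left-boundary values of $\wt f_+$ through the Region~2 representation with the resonant asymptotics of Lemmas~\ref{lem:f+-asymptotics} and~\ref{lem:f+-asymptoticsResonance}, then substitute into the explicit formulas for $a_\eps^+$ and $b_\eps^+$. The only difference is that you spell out the verification of $c_\eps^-\to 0$ in the resonant case (via the $o(\eps)$ estimate on the numerator from~\eqref{eq:f+'0} against $D(\eps k)\sim\eps$ from~\eqref{eq:limD(ek)/e}), which the paper merely asserts by reference to Lemma~\ref{lem:f+-asymptotics} and~\eqref{eq:limD(ek)/e}.
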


We note that the above limits are the reflection and transmission coefficients of the Schr\"odinger operator $S(\theta)$ and that they coincide with the value at $k=0$ of the reflection and transmission coefficients for the Sch\"odinger operator~$S_1$, see~\cite{Klaus:1988}. This can also be compared with the analogous result in~\cite{GolovatyHryniv:2010,GolovatyManko:2009,Manko:2009} proved for the
operator family~$S_\eps$ in the case where the support of~$V$ is contained in~$[-1,1]$.


\subsection{Some useful estimates}

We conclude this section with establishing several estimates that will essentially be used in the next
section.

\begin{lem}\label{lem:Jost.fpm}
There are constants $K_1$ and $K_2$ such that the following holds:
\begin{itemize}
  \item[(i)] for all sufficiently small $\eps$ and all $x$ with $|x|>x_\eps$,
  \[
        |\wt D^{-1}_\eps(k) \wt f_\pm(x,\eps,k)| \le K_1 |\rme^{\pm\rmi kx}|;
  \]
  \item[(ii)] for all sufficiently small $\eps$ and all $x\in\bR$,
  \begin{eqnarray}\nonumber
            \int_x^\infty \frac{|\wt f_+(t,\eps,k)|^2}{|\wt D_\eps(k)|^2}\,\rmd t
                &  \le K_2 |\rme^{2\rmi kx}|,\\
            \int_{-\infty}^x \frac{|\wt f_-(t,\eps,k)|^2}{|\wt D_\eps(k)|^2}\,\rmd t
                &\le K_2 |\rme^{-2\rmi kx}|.\nonumber
  \end{eqnarray}
\end{itemize}
\end{lem}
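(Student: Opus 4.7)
\medskip

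\noindent\textbf{Proof plan.} The plan is to exploit the explicit form of $\wt f_\pm$ outside the shrinking interval $[-x_\eps,x_\eps]$ together with the asymptotics of the scattering coefficients established in the previous subsections. I would first observe that $\wt D_\eps(k)=-2\rmi k a_\eps^+$ and, from the analyses of Corollaries~\ref{cor:prox.nres} and~\ref{cor:prox.res}, that the quantity $|\wt D_\eps(k)|^{-1}$ is bounded uniformly in small $\eps$: indeed, in the non-resonant case $\eps a_\eps^+\to -D(0)\ne 0$, so $|\wt D_\eps(k)|^{-1}=O(\eps)$, while in the resonant case $a_\eps^+\to \tfrac12(\theta+\theta^{-1})\ne 0$, so $|\wt D_\eps(k)|^{-1}$ has a finite positive limit. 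Similarly, the reflection coefficient $\wt r_\eps(k)=b_\eps^+/a_\eps^+$ converges in both cases and is therefore uniformly bounded in $\eps$.

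For part (i), the estimate for $\wt f_+$ with $x>x_\eps$ is immediate from $\wt f_+(x,\eps,k)=\rme^{\rmi kx}$ together with the boundedness of $|\wt D_\eps(k)|^{-1}$. For $x<-x_\eps$, I would write
\[
    \wt D_\eps^{-1}(k)\wt f_+(x,\eps,k)=\frac{1}{-2\rmi k}\Bigl(\rme^{\rmi kx}+\wt r_\eps(k)\rme^{-\rmi kx}\Bigr)
\]
and use that, since $\myIm k>0$ and $x<0$, we have $|\rme^{-\rmi kx}|=\rme^{\myIm k\,x}\le \rme^{-\myIm k\,x}=|\rme^{\rmi kx}|$, so that the whole expression is bounded by $\frac{1}{2|k|}(1+|\wt r_\eps(k)|)|\rme^{\rmi kx}|$. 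Taking $K_1$ to be the supremum of these constants over small $\eps$ settles (i) for $\wt f_+$; the corresponding bound for $\wt f_-$ follows by symmetry.

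For part (ii), I would split $\int_x^\infty$ according to which of the regions $(-\infty,-x_\eps)$, $(-x_\eps,x_\eps)$, $(x_\eps,\infty)$ the point $x$ lies in. In the two outer regions the pointwise bound from (i) gives
\[
    \int_x^\infty \frac{|\wt f_+(t,\eps,k)|^2}{|\wt D_\eps(k)|^2}\,\rmd t
    \le K_1^2\int_x^\infty\rme^{-2\myIm k\,t}\,\rmd t
    =\frac{K_1^2}{2\myIm k}|\rme^{\rmi kx}|^2,
\]
yielding the required estimate directly (and also handling the contribution of $[\,x_\eps,\infty)$ when $x\le x_\eps$). It remains to bound the integral over $[\max(x,-x_\eps),\,x_\eps]$, where $\wt f_+(t,\eps,k)=c_\eps^+ f_+(t/\eps,\eps k)+c_\eps^-f_-(t/\eps,\eps k)$. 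After the change of variable $t=\eps\xi$, I would estimate this as
\[
    \eps\!\int_{-\xi_\eps}^{\xi_\eps}\!\!|c_\eps^+ f_+(\xi,\eps k)+c_\eps^- f_-(\xi,\eps k)|^2\,\rmd\xi,
\]
using the bounds on $|f_\pm(\xi,\eps k)|$ furnished by Proposition~\ref{pro:fpm} (non-resonant) and Lemma~\ref{lem:f+-asymptoticsResonance} (resonant). Since for $x\in[-x_\eps,x_\eps]$ the quantity $|\rme^{\rmi kx}|^2$ is bounded below by a positive constant, the claimed bound will follow once one checks that $\eps\xi_\eps^3/|\wt D_\eps|^2$ (non-resonant) and $\eps\xi_\eps/|\wt D_\eps|^2=x_\eps/|\wt D_\eps|^2$ (resonant) remain bounded as $\eps\to 0$.

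The main technical obstacle is the bookkeeping of $\eps$-powers in the non-resonant case: the Jost solutions $f_\pm$ may grow linearly on $[-\xi_\eps,\xi_\eps]$ and $\xi_\eps\to\infty$, while simultaneously $|\wt D_\eps|^{-2}=O(\eps^2)$; the growth in $\xi_\eps$ is absorbed by these $\eps$-factors, since $\eps^{3}\xi_\eps^{3}=x_\eps^{3}\to 0$. This compensation, together with the bounds $c_\eps^+\to 1$ and $c_\eps^-\to 0$ that keep the coefficients under control, is what makes the inner contribution in (ii) negligible and delivers the desired constant $K_2$.
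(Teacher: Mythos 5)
Your treatment of part (i) coincides with the paper's: both rely on the explicit representation of $\wt f_\pm$ in the two outer regions, the uniform boundedness of $|\wt D_\eps^{-1}(k)|$ (because $\eps a_\eps^+\to -D(0)\ne 0$ in the non-resonant case and $a_\eps^+\to\tfrac12(\theta+\theta^{-1})\ne 0$ in the resonant case), and the convergence of $\wt r_\eps(k)$ from Corollaries~\ref{cor:prox.nres} and~\ref{cor:prox.res}.

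For part (ii) you take a genuinely different route to the only nontrivial term, the middle-region integral $\int_{-x_\eps}^{x_\eps}|\wt f_+(t,\eps,k)/\wt D_\eps(k)|^2\,\rmd t$. The paper gets this to vanish by an elegant indirect argument: it applies the multiplication operator $P_\eps$ by $\chi_\eps$ to $(\wt S_\eps-k^2)^{-1}h$ for the specific test function $h=\chi_{[-3,-1]}$, observes that the resulting expression is an explicit $\eps$-independent scalar times $\chi_\eps\wt f_+/\wt D_\eps$, and then invokes the already-proved norm resolvent convergence (Theorem~\ref{thm:conv-tildeS}) together with $\|P_\eps g\|\to 0$ to conclude that the middle $L_2$-norm tends to $0$. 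You instead compute the middle integral directly: after the change of variables $t=\eps\xi$ and the expansion $\wt f_+=c_\eps^+f_++c_\eps^-f_-$, you appeal to the linear growth estimates of Proposition~\ref{pro:fpm} in the non-resonant case and the uniform bound $C_5$ of Lemma~\ref{lem:f+-asymptoticsResonance} in the resonant case, and track the cancellation between $\xi_\eps$-growth and $\eps$-smallness ($\eps^3\xi_\eps^3=x_\eps^3\to 0$, respectively $\eps\xi_\eps=x_\eps\to 0$, using $|\wt D_\eps|^{-2}=O(\eps^2)$ in the non-resonant case and $O(1)$ in the resonant one). Both arguments establish that the middle-region contribution is $o(1)$, after which the outer contributions and the normalization $|\rme^{2\rmi kx}|\ge\rme^{-2|\myIm k|\,x_\eps}$ for $|x|\le x_\eps$ finish the estimate exactly as you describe. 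Your version is more elementary and self-contained, at the cost of some extra $\eps$-bookkeeping; the paper's version is shorter but leans on Theorem~\ref{thm:conv-tildeS} (which is harmless, since that theorem does not rely on the lemma, but it does create a dependency that your argument avoids).
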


\begin{proof}
We only give details for the right Jost solution~$\wt f_+(\,\cdot\,,\eps,k)$, the case of $\wt
f_-(\,\cdot\,,\eps,k)$ being completely analogous. Item~(i) follows from the fact that
$\wt f(x,\eps,k)=\rme^{\rmi kx}$ for $x>x_\eps$ and that for $x<-x_\eps$
\[
    \frac{\wt f(x,\eps,k)}{\wt D_\eps(k)}
        = -\frac1{2ik}\, \rme^{\rmi kx} - \frac{\wt r_\eps(k)}{2ik}\, \rme^{-\rmi kx},
\]
where $\wt r_\eps(k)$ has a finite limit as $\eps\to0$ by Corollary~\ref{cor:prox.nres} in the
non-resonant case and by Corollary~\ref{cor:prox.res} in the resonant one.

To prove (ii), we first estimate the integral over $(-x_\eps,x_\eps)$. Denote by $P_\eps$ the operator of
multiplication by the function~$\chi_\eps$; then $P_\eps$ is an orthogonal projector and $\|P_\eps
h\|\to0$ as $\eps\to0$ for every $h\in L_2(\bR)$. In view of Theorem~\ref{thm:conv-tildeS}, we
find that
\[
    P_\eps(\wt S_\eps-k^2)^{-1} h = P_\eps\left[(\wt S_\eps-k^2)^{-1} - (S_0-k^2)^{-1}\right]h
                                    + P_\eps (S_0-k^2)^{-1} h \to 0
\]
in $L_2(\bR)$ as $\eps \to0$. Take now $h$ to be the characteristic function of the interval $[-3,-1]$;
then for all $\eps$ such that $x_\eps<1$ we calculate
\begin{eqnarray}\nonumber
    &P_\eps (\wt S_\eps-k^2)^{-1} h(x)
        = \dfrac{\chi_\eps(x)\wt f_+(x,\eps,k)}{\wt D_\eps(k)}
            \int_{-3}^{-1}\rme^{-\rmi kt}\,\rmd t \\\nonumber
       &\lefteqn{ = \dfrac{2\rme^{\rmi k}\sin k}k \,  \frac{\chi_\eps(x)\wt f_+(x,\eps,k)}{\wt D_\eps(k)}.}
\end{eqnarray}
It thus follows that
\[
    \|P_\eps (\wt S_\eps-k^2)^{-1} h\|^2 = \frac{4|\rme^{\rmi k}\sin k|^2}{|k|^2}
            \int_{-x_\eps}^{x_\eps} \frac{|f_+(x,\eps,k)|^2}{ |\wt D_\eps(k)|^2}\,\rmd x \to0
\]
as $\eps\to0$.

Returning now to part~(ii) of the lemma, we choose $\eps_0$ so small that $|\rme^{2\rmi kx}|>\tfrac12$
for all $x\le x_{\eps_0}$. If $x>x_{\eps_0}$, the desired inequality holds by (i) for all $\eps<\eps_0$ with $K_2 = K_1^2/(2|\myIm k|)$; otherwise we use (i) and the above limit to get the estimate
\begin{eqnarray}\nonumber
    &\displaystyle\int_x^\infty \frac{|\wt f_+(t,\eps,k)|^2}{|\wt D^2_\eps(k)|}\,\rmd t
            \le K_1^2 \int_x^\infty |\rme^{2\rmi kt}|\,\rmd t
                + \int_{-x_\eps}^{x_\eps}
                  \frac{|f_+(t,\eps,k)|^2}{ |\wt D_\eps(k)|^2}\,\rmd t \\\nonumber
            &\lefteqn{ \le K_2 |\rme^{2\rmi kx}|}
\end{eqnarray}
with $K_2:=1+K_1^2/(2|\myIm k|)$ holding for all sufficiently small~$\eps$.
\end{proof}

\section{Proximity of the operator families $S_\eps$ and $\wt S_\eps$}\label{sec:prox}

In this section we shall establish proximity of the operator families $S_\eps$ and $\wt S_\eps$ in the
norm resolvent sense, i.e., we shall prove the following theorem.

\begin{thm}\label{thm:res-proximity}
For every $k^2 \in \bC\setminus\bR$ it holds that
\begin{equation}\label{eq:prox.conv}
    \|(S_\eps - k^2)^{-1} - (\wt S_\eps - k^2)^{-1}\| \to 0
\end{equation}
as $\eps \to0$.
\end{thm}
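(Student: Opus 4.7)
My plan is to combine the second resolvent identity with a Hilbert--Schmidt type factorisation of the perturbation $V_\eps(1-\chi_\eps)$ supported off the truncation window. Setting $W_\eps:=V_\eps(1-\chi_\eps)$, the identity reads
\[
(S_\eps-k^2)^{-1}-(\wt S_\eps-k^2)^{-1}=(\wt S_\eps-k^2)^{-1}\,W_\eps\,(S_\eps-k^2)^{-1}.
\]
Since $V$ is real-valued, I factor $W_\eps=B_\eps A_\eps$ by the multiplication operators $A_\eps(x):=|V_\eps(x)|^{1/2}(1-\chi_\eps(x))$ and $B_\eps(x):=\mathrm{sgn}\,V_\eps(x)\cdot A_\eps(x)$, which gives
\[
\|(S_\eps-k^2)^{-1}-(\wt S_\eps-k^2)^{-1}\|\le \|(\wt S_\eps-k^2)^{-1}B_\eps\|\cdot\|A_\eps(S_\eps-k^2)^{-1}\|,
\]
and the goal is to bound each factor in the Hilbert--Schmidt norm, showing it tends to zero.

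For the first factor,
\[
\|(\wt S_\eps-k^2)^{-1}B_\eps\|^2 \le \|(\wt S_\eps-k^2)^{-1}B_\eps\|^2_{HS} =\int|B_\eps(y)|^2\Bigl(\int|\wt G_\eps(x,y,k)|^2\,\rmd x\Bigr)\rmd y.
\]
Splitting the inner integral at $x=y$ and writing $|\wt G_\eps|^2=|\wt f_\pm(x,\eps,k)|^2|\wt f_\mp(y,\eps,k)|^2/|\wt D_\eps(k)|^2$ (with $\pm$ chosen according as $x\gtrless y$), Lemma~\ref{lem:Jost.fpm}(ii) controls the relevant tail integrals $\int|\wt f_\pm(x,\eps,k)|^2/|\wt D_\eps(k)|^2\,\rmd x$ by $K_2|\rme^{\pm 2\rmi ky}|$, while Lemma~\ref{lem:Jost.fpm}(i) bounds $|\wt f_\mp(y,\eps,k)|^2/|\wt D_\eps(k)|^2$ by $K_1^2|\rme^{\mp 2\rmi ky}|$ on $|y|>\xp$. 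The factors of $|\wt D_\eps(k)|^2$ cancel, as do the exponentials, yielding $\int|\wt G_\eps(x,y,k)|^2\,\rmd x\le C$ uniformly in $\eps$ on $|y|>\xp$. Consequently
\[
\|(\wt S_\eps-k^2)^{-1}B_\eps\|^2\le C\int_{|y|>\xp}|V_\eps(y)|\,\rmd y=\frac{C}{\eps}\int_{|\xi|>\xip}|V(\xi)|\,\rmd\xi\;\to\;0
\]
as $\eps\to0$, the vanishing being exactly the one used in the proof of Lemma~\ref{lem:f+-asymptotics} to conclude $\eps^{-1}\sigma_\pm(\mp\xip)\to 0$ via the choice of $\xip$ dictated by $\rho_V$ (Lemma~\ref{lem:FMwithRho}).

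The second factor is handled analogously. The Green function of $S_\eps$ admits the representation
\[
G_\eps(x,y,k)=\frac{\eps}{D(\eps k)}\,\phi_\pm(x,\eps,k)\,\phi_\mp(y,\eps,k),\qquad \phi_\pm(x,\eps,k):=f_\pm(x/\eps,\eps k),
\]
with $\pm/\mp$ again selected by $x\gtrless y$ and with $D(\eps k)/\eps$ serving as the Jost Wronskian. I would establish the $\phi_\pm$-analogues of Lemma~\ref{lem:Jost.fpm}, namely $\eps|\phi_\pm(x,\eps,k)|/|D(\eps k)|\le K_1'|\rme^{\pm\rmi kx}|$ and $\int\eps^2|\phi_\pm(t,\eps,k)|^2/|D(\eps k)|^2\,\rmd t\le K_2'|\rme^{\pm 2\rmi kx}|$ (with the integration taken over the appropriate half-line) for $|x|>\xp$; the same cancellation scheme then yields $\int|G_\eps(x,y,k)|^2\,\rmd y\le C'$ uniformly in $\eps$ for $|x|>\xp$, and hence $\|A_\eps(S_\eps-k^2)^{-1}\|^2\to 0$ by the argument used for the first factor. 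This is the step I expect to be the main obstacle: the bound~\eqref{eq:f+R} of Proposition~\ref{pro:fpm} carries a factor $(1+|x|)$ that is too crude for $|x/\eps|>\xip$, and one must instead use the scattering decomposition of $f_\pm(\xi,\eps k)$ valid on $\{|\xi|>\xip\}$ together with the identity $D(\eps k)=-2\rmi\eps k\,A(\eps k)$ relating the Jost Wronskian to the scattering amplitude $A(\eps k)$, so as to produce the crucial cancellation with the prefactor $\eps/D(\eps k)$; uniformity in $\eps$ then follows separately in the non-resonant case ($|D(\eps k)|\asymp 1$, $|A(\eps k)|\asymp 1/\eps$) and in the resonant one ($|D(\eps k)|\asymp\eps$, $|A(\eps k)|\asymp 1$, in the spirit of Lemma~\ref{lem:f+-asymptoticsResonance}). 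Combining the two Hilbert--Schmidt bounds yields~\eqref{eq:prox.conv}.
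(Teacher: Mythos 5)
Your route is genuinely different from the paper's, and the difference matters. The paper writes $S_\eps=\wt S_\eps+U_\eps W_\eps$ and uses the Konno--Kuroda/Kato factorization~\eqref{eq:prox.res}, $R_\eps-\wt R_\eps=\wt R_\eps W_\eps[I+U_\eps\wt R_\eps W_\eps]^{-1}U_\eps\wt R_\eps$, whose ingredients $\wt R_\eps W_\eps$, $U_\eps\wt R_\eps W_\eps$, $U_\eps\wt R_\eps$ involve \emph{only} the resolvent of the truncated operator. Since $\wt f_\pm(x,\eps,k)=\rme^{\pm\rmi kx}$ exactly on $\pm x>\xp$, Lemma~\ref{lem:Jost.fpm} is cheap and the Schur test (Proposition~\ref{pro:prox.T}) closes. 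Your use of the raw second resolvent identity instead produces the factor $\|A_\eps(S_\eps-k^2)^{-1}\|$, which forces you to estimate the Jost solutions $\phi_\pm(x,\eps,k)=f_\pm(x/\eps,\eps k)$ of the \emph{untruncated} scaled operator. That is exactly the difficulty the paper engineers away.

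On the second factor, your sketch correctly identifies the obstacle but does not supply the key estimate. The pointwise bound $\eps|\phi_+(x,\eps,k)|/|D(\eps k)|\le K_1'|\rme^{\rmi kx}|$ for $x<-\xp$ cannot be read off from Proposition~\ref{pro:fpm} (the $(1+|x|)$ in~\eqref{eq:f+R} is fatal) nor from Lemma~\ref{lem:g+-asymptotics} as stated: writing $f_+(\xi,\eps k)=\alpha_\eps f_-(\xi,\eps k)+D(\eps k)g_-(\xi,\eps k)$ and using~\eqref{eq:g-xi_eps} yields $\eps|g_-(x/\eps,\eps k)|\le 6|x|\,|\rme^{\rmi kx}|$, again with the unwanted $|x|$. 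What actually saves the argument is that for $\myIm k>0$ the estimate in Lemma~\ref{lem:g+-asymptotics} is \emph{not sharp}: repeating its proof with the exponentially decaying $|\rme^{\rmi kt}|$ instead of the crude bound gives $|g_\mp(\xi,k)|\le \frac{3}{\myIm k}\,|\rme^{\pm\rmi k\xi}|$ (no $|\xi|$ factor), whence $\eps|g_\mp(x/\eps,\eps k)|\le \frac{3}{\myIm k}|\rme^{\pm\rmi kx}|$. Only with this refinement, combined with the boundedness of $\eps\alpha_\eps/D(\eps k)$ separately in the resonant and non-resonant cases (as in Lemma~\ref{lem:f+-asymptoticsResonance} and~\eqref{eq:limD(ek)/e}), does your claimed analogue of Lemma~\ref{lem:Jost.fpm}(i) hold; your tail integral must then also be checked on $|x|<\xp$ (where $\phi_\pm$ are not exponentials), which needs~\eqref{eq:estF-+Resonance} resp.\ the crude bound plus $x_\eps^3\to0$. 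None of this is spelled out. So the plan is not wrong, but it is not yet a proof, and it is substantially more delicate than the paper's. Finally, a correction to your first factor: you cannot apply Lemma~\ref{lem:Jost.fpm}(i) and~(ii) \emph{to the same summand} of $\int|\wt G_\eps(x,y,k)|^2\,\rmd x$, because $|\wt G_\eps|^2$ contains a single power of $|\wt D_\eps(k)|^{-2}$ whereas each of (i), (ii) already consumes one; your ``cancellation'' would in fact leave an uncontrolled factor $|\wt D_\eps(k)|^2$, which blows up as $\eps\to0$ in the non-resonant case. The correct accounting is that for $|y|>\xp$ one of the two Jost factors equals $\rme^{\pm\rmi ky}$ \emph{exactly} and needs no lemma at all, and only the other requires (i) or (ii).
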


Clearly, combination of Theorems~\ref{thm:conv-tildeS} and \ref{thm:res-proximity} gives a complete proof
of the claimed convergence of $S_\eps$ to~$S_0$.

Set
\[
    u_\eps(x):= \eps^{-1}|V(x/\eps)|^{1/2}\,[1 - \chi_\eps(x)]
\]
and
\[
    w_\eps(x):= {\rm sign}\,(V(x/\eps))\, u_\eps(x)
\]
and denote by $U_\eps$ and $W_\eps$ the operators of multiplications by $u_\eps$ and $w_\eps$
respectively. The operators $U_\eps$ and $W_\eps$ are in general unbounded; note, however, that the
$L_2$-norm of the functions $u_\eps$ and $w_\eps$ vanishes as $\eps\to0$. Indeed, in view of Lemma~\ref{lem:FMwithRho} we deduce
\begin{equation}\label{eq:norm-ueps}
  \begin{array}{l}
  \displaystyle   \|u_\eps\|^2 = \|w_\eps\|^2 = \frac1{\eps^2}\int_{|t|>x_\eps}
                                    \Bigl|V\Bigl(\frac{t}{\eps}\Bigr)\Bigr|\,\rmd t
                 = \frac1\eps\int_{|s|>\xi_\eps} |V(s)|\,\rmd s\\
  \displaystyle\qquad\qquad    \le \frac1{\eps}\int_{|s|>\xi_\eps}\frac{\rho_V(s)}{\rho_V(\xip)}|V(s)|\,\rmd s = \int_{|s|>\xi_\eps}\rho_V(s)|V(s)|\,\rmd s=o(1)
  \end{array}
\end{equation}
as $\eps\to0$.

The relation $S_\eps = \widetilde S_\eps + U_\eps W_\eps$ yields the following well-known (formal)
representation of the resolvent $R_\eps(k):=(S_\eps - k^2)^{-1}$ of $S_\eps$ in terms of the resolvent
$\widetilde R_\eps(k):=(\widetilde S_\eps - k^2)^{-1}$ of~$\widetilde S_\eps$ and the perturbation:
\begin{equation}\label{eq:prox.res}
    R_\eps(k) - \widetilde R_\eps(k) = \widetilde R_\eps(k) W_\eps
        \bigl[I + U_\eps \widetilde R_\eps(k) W_\eps \bigr]^{-1}
        U_\eps \widetilde R_\eps (k).
\end{equation}
We shall prove that the norms of the operators~$\widetilde R_\eps(k) W_\eps$, $U_\eps \wt R_\eps(k)
W_\eps$, and $U_\eps  \wt R_\eps (k)$ vanish as $\eps \to0$; this will justify both
formula~\eqref{eq:prox.res} and the claim of Theorem~\ref{thm:res-proximity}.


\subsection{The norm of special integral operators}
It is immediate to see that the operators~$\widetilde R_\eps(k) W_\eps$, $U_\eps \wt R_\eps(k) W_\eps$,
and $U_\eps  \wt R_\eps (k)$ are integral ones and act on a function $y\in L_2(\bR)$ via
\[
        \frac{\varphi_+(x)}{\wt D_\eps(k)} \int_{-\infty}^x \varphi_-(t) y(t)\,\rmd t
            + \frac{\varphi_-(x)}{\wt D_\eps(k)} \int_x^{\infty} \varphi_+(t) y(t)\,\rmd t,
\]
where $\varphi_+$ equals one of the functions $\wt f_+(\,\cdot\,,\eps,k)$, $\wt
f_+(\,\cdot\,,\eps,k)w_\eps$ or $\wt f_+(\,\cdot\,,\eps,k)u_\eps$, and similarly for $\varphi_-$. To
estimate the norm of such operators, we use the following result (see, e.g.,~\cite{Kufner-Persson:2003}):

\begin{prop}\label{pro:prox.T}
  Assume that $\psi_-$ and $\psi_+$ are functions belonging to $L_{2,\mathrm{loc}}^2(\bR)$ and let
  $T_\pm$ be integral operators defined via
  \[
        T_- y(x) = \psi_+(x) \int_{-\infty}^x \psi_-(t) y(t)\,\rmd t
  \]
  and
  \[
        T_+ y(x) = \psi_-(x) \int_x^{\infty} \psi_+(t) y(t)\,\rmd t
  \]
  respectively. Then $T_\pm$ are bounded in $L_2(\bR)$ if and only if the quantity
  \[
     K:= \sup_{x\in\bR} \biggl(\int_{-\infty}^x |\psi_-(t)|^2 \,\rmd t
                                \cdot \int_x^\infty |\psi_+(t)|^2\,\rmd t\biggr)^{1/2}
  \]
  is finite; in this case $\|T_\pm\| \le 2 K$.
\end{prop}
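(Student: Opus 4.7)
The plan is to prove necessity by testing $T_-$ on carefully chosen indicator-weighted functions, and to prove sufficiency via a weighted Cauchy--Schwarz inequality followed by a Fubini-type interchange of integration, in the spirit of Muckenhoupt's proof of the weighted Hardy inequality. Introduce the primitives
\[
  A(x):=\int_{-\infty}^x|\psi_-(t)|^2\,\rmd t, \qquad B(x):=\int_x^\infty|\psi_+(t)|^2\,\rmd t,
\]
so that $A'=|\psi_-|^2$, $-B'=|\psi_+|^2$, and the hypothesis $K<\infty$ reads $A(x)B(x)\le K^2$ for every $x$. For necessity, fix $x_0\in\bR$ with $A(x_0)<\infty$ and take $y(t):=\overline{\psi_-(t)}\,\chi_{(-\infty,x_0]}(t)$. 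Then for $x>x_0$ the inner integral in $T_-y(x)$ equals $A(x_0)$, so $\|T_-y\|^2\ge A(x_0)^2B(x_0)$ while $\|y\|^2=A(x_0)$; this yields $A(x_0)B(x_0)\le\|T_-\|^2$, hence $K\le\|T_-\|$. The case $A(x_0)=\infty$ is handled by truncation and forces $T_-$ unbounded. The bound $K\le\|T_+\|$ is symmetric.

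For sufficiency, fix a parameter $\alpha\in(-1,0)$ to be optimised and apply Cauchy--Schwarz with weight $A(t)^\alpha$:
\[
  \Bigl|\int_{-\infty}^x\psi_-(t)y(t)\,\rmd t\Bigr|^2
   \le \Bigl(\int_{-\infty}^x|\psi_-(t)|^2A(t)^\alpha\,\rmd t\Bigr)\Bigl(\int_{-\infty}^x|y(t)|^2A(t)^{-\alpha}\,\rmd t\Bigr).
\]
The first factor integrates in closed form to $A(x)^{\alpha+1}/(\alpha+1)$. Substituting into $\|T_-y\|^2=\int|\psi_+(x)|^2|{\,\cdot\,}|^2\,\rmd x$ and exchanging the order of integration via Fubini--Tonelli gives
\[
  \|T_-y\|^2\le \frac{1}{\alpha+1}\int|y(t)|^2 A(t)^{-\alpha}\Bigl(\int_t^\infty|\psi_+(x)|^2A(x)^{\alpha+1}\,\rmd x\Bigr)\rmd t.
\]
Using $A(x)^{\alpha+1}\le K^{2(\alpha+1)}B(x)^{-(\alpha+1)}$ in the inner integral, then evaluating $\int_t^\infty -B'(x)B(x)^{-(\alpha+1)}\rmd x=B(t)^{-\alpha}/(-\alpha)$ (permitted by $-\alpha>0$), and finally applying $B(t)^{-\alpha}\le K^{-2\alpha}A(t)^\alpha$, one collapses the inner bracket to $K^2 A(t)^\alpha/(-\alpha)$. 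This cancels with the factor $A(t)^{-\alpha}$, producing
\[
  \|T_-y\|^2\le \frac{K^2}{(\alpha+1)(-\alpha)}\,\|y\|^2.
\]
The prefactor is minimised at $\alpha=-1/2$, where $(\alpha+1)(-\alpha)=1/4$, hence $\|T_-\|\le 2K$. The estimate for $T_+$ follows by the symmetric argument with $A,\psi_-$ and $B,\psi_+$ interchanged and orientation reversed.

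The main obstacle is the exponent bookkeeping when interchanging the integrations and converting the $A$-weights into $B$-weights via $AB\le K^2$. The restriction $\alpha\in(-1,0)$ is essential: the upper endpoint is needed for $A^\alpha A'$ to be integrable near $-\infty$ (where $A$ vanishes), and the lower endpoint is needed for $-B'B^{-(\alpha+1)}$ to be integrable at $+\infty$. This open interval contains $\alpha=-1/2$, which is precisely the value delivering the sharp constant $2K$ declared in the statement.
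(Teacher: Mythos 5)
Your proof is correct, but note that the paper offers no argument for Proposition~\ref{pro:prox.T}: it is simply cited from Kufner and Persson's monograph on Hardy-type inequalities. Your derivation is a standard and accurate rendition of the Muckenhoupt weighted Cauchy--Schwarz proof. A few details worth having explicitly in mind if you were to write this out in full: the closed-form evaluation $\int_{-\infty}^x A'(t)A(t)^\alpha\,\rmd t = A(x)^{\alpha+1}/(\alpha+1)$ uses $A(-\infty)=0$ and $\alpha+1>0$, and similarly $\int_t^\infty (-B')B^{-(\alpha+1)} = B(t)^{-\alpha}/(-\alpha)$ uses $B(+\infty)=0$ and $-\alpha>0$, so both endpoints of the interval $(-1,0)$ genuinely matter; the weights $A^\alpha$ and $A^{-\alpha}$ are only singular on the set where $A=0$, which is a subset of $\{\psi_-=0\}$ and is harmless under the $0\cdot\infty=0$ convention; and the substitution $A^{\alpha+1}\le K^{2(\alpha+1)}B^{-(\alpha+1)}$ requires $B>0$, which holds a.e.\ on the support of $\psi_+$, the only place the integrand of the inner integral is nonzero. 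The bound for $T_+$ does indeed follow by symmetry (or by observing that $T_+$ is, up to complex conjugation of the weights, the adjoint of $T_-$, so $\|T_+\|=\|T_-\|$). Your necessity argument is clean and gives the (known) lower bound $K\le\|T_\pm\|$, which the proposition does not claim but which makes the sufficiency bound $2K$ sharp within a factor of $2$.

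The upshot: since the paper cites rather than proves, your argument fills a genuine gap and does so by the canonical route. An alternative route worth knowing, which Kufner--Persson also present, is Schur's test with test weight $w(t)=(A(t)B(t))^{1/4}$; it reaches the same constant $2K$ with less bookkeeping than the parametrised Cauchy--Schwarz, but your optimisation over $\alpha$ makes the origin of the constant $2$ more transparent.
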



\subsection{The operator $U_\eps \wt R_\eps(k) W_\eps$} This is an integral operator of the form
\begin{eqnarray}\nonumber
    &&U_\eps \wt R_\eps(k) W_\eps y(x) = \dfrac{u_\eps(x)\wt f_+(x,\eps,k)}{\wt D_\eps(k)}
                                \int_{-\infty}^x y(t) w_\eps(t)\wt f_-(t,\eps,k)\,\rmd t \\\nonumber
                         &&\qquad\qquad\qquad\quad + \dfrac{u_\eps(x)\wt f_-(x,\eps,k)}{\wt D_\eps(k)}
                                \int_x^{\infty}  y(t) w_\eps(x)\wt f_+(t,\eps,k)\,\rmd t.
\end{eqnarray}
By virtue of Proposition~\ref{pro:prox.T} and the relation $|u_\eps|=|w_\eps|$ we conclude that the norm
of $U_\eps \wt R_\eps(k) W_\eps$ vanishes as $\eps\to0$ provided this is true for the quantity
$\sup_{x\in\bR} K_\eps(x)$, with
\[
    K_\eps(x) := \frac1{|\wt D_\eps(k)|^2}\int_{-\infty}^x |u_\eps(t)\wt f_-(t,\eps,k)|^2 \,\rmd t
                                \cdot \int_x^\infty |u_\eps(t)\wt f_+(t,\eps,k)|^2\,\rmd t.
\]

For $x\ge x_\eps$ we have
\[
    \int_x^\infty |u_\eps(t)\wt f_+(t,\eps,k)|^2\,\rmd t
        = \int_x^\infty |u_\eps(t)\rme^{\rmi kt}|^2\,\rmd t \le |\rme^{2\rmi kx}|\|u_\eps\|^2;
\]
similarly, by Lemma~\ref{lem:Jost.fpm}(i)
\begin{equation}\label{eq:uf-}
    \frac1{|\wt D_\eps(k)|^2}
                \int_{-\infty}^{x} |u_\eps(t)\wt f_-(t,\eps,k)|^2\,\rmd t
                \le K_1^2 \,|\rme^{-2\rmi kx}|\,\|u_\eps\|^2.
\end{equation}
Therefore,
\[
    \sup_{x\ge x_\eps} K_\eps(x) \le K_1^2 \|u_\eps\|^4 \to 0
\]
as $\eps\to0$ due to~\eqref{eq:norm-ueps}. For $x\le -x_\eps$ the arguments exploit the relation $\wt
f_-(x,\eps,k)=\rme^{-\rmi kx}$ and the estimate of Lemma~\ref{lem:Jost.fpm}\textit{(i)} to conclude that
\[
    \sup_{x\le -x_\eps} K_\eps(x) \le K_1^2 \|u_\eps\|^4 \to 0
\]
as $\eps\to0$. Finally,
\[
    \sup_{|x|<x_\eps} K_\eps(x) \le \frac{|\rme^{4\rmi kx_\eps}|\|u_\eps\|^4}
                    {|\wt D_\eps (k)|^2}\to0
\]
as $\eps\to0$, thus proving the following statement.

\begin{lem}\label{lem:URW-norm}
The norm of the operator $U_\eps \wt R_\eps(k) W_\eps$ vanishes as $\eps\to0$.
\end{lem}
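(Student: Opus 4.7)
The plan is to recognize $U_\eps\wt R_\eps(k)W_\eps$ as a sum of two integral operators of the type $T_\pm$ treated in Proposition~\ref{pro:prox.T}, and then to verify that the boundedness quantity $K$ of that proposition tends to zero. Writing out the Green function of $\wt R_\eps(k)$ through the Jost solutions and using the pointwise identity $|w_\eps|=u_\eps$, I would take $\psi_+(t)=u_\eps(t)\wt f_+(t,\eps,k)$ and $\psi_-(t)=u_\eps(t)\wt f_-(t,\eps,k)/\wt D_\eps(k)$, distributing the resolvent denominator into the minus factor. Proposition~\ref{pro:prox.T} then reduces everything to proving $\sup_{x\in\bR} K_\eps(x) \to 0$, where $K_\eps(x)$ is the product of the corresponding $L_2$-weights on $(-\infty,x)$ and $(x,\infty)$.

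For the supremum, I would split $\bR$ into three regions dictated by the support of $u_\eps$, which is contained in $\{|t|>x_\eps\}$. In the region $x\ge x_\eps$, the right integral simplifies to $\int_x^\infty |u_\eps(t)|^2\,\rmd t$ multiplied by $|\rme^{2\rmi kx}|$, since $\wt f_+(t,\eps,k)=\rme^{\rmi kt}$ for $t>x_\eps$, and is thus at most $|\rme^{2\rmi kx}|\|u_\eps\|^2$; the left integral, weighted by $1/|\wt D_\eps(k)|^2$, is bounded by $K_1^2|\rme^{-2\rmi kx}|\|u_\eps\|^2$ via Lemma~\ref{lem:Jost.fpm}(i). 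The exponential factors cancel and $K_\eps(x)\le K_1^2\|u_\eps\|^4$. The region $x\le -x_\eps$ is treated symmetrically, using $\wt f_-(t,\eps,k)=\rme^{-\rmi kt}$ on $(-\infty,x)$. For $|x|<x_\eps$, both integrals pick up contributions only from $\{|t|>x_\eps\}$, so Lemma~\ref{lem:Jost.fpm}(i) applies to both factors and yields $K_\eps(x)=O(\|u_\eps\|^4)$ with a bounded prefactor of the form $|\rme^{\pm 2\rmi kx_\eps}|$ that stays finite since $x_\eps\to 0$.

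Combining the three regions, $\sup_x K_\eps(x)=O(\|u_\eps\|^4)$, and the conclusion follows from the decay \eqref{eq:norm-ueps}. The one delicate point is the middle region $|x|<x_\eps$, where the factor $1/|\wt D_\eps(k)|^2$ could in principle cause trouble; the key technical input that neutralizes this concern is Lemma~\ref{lem:Jost.fpm}(i), which bundles the resolvent denominator directly with the Jost solutions into a uniform exponential bound valid outside $(-x_\eps,x_\eps)$. This bundling handles the non-resonant regime (where $|\wt D_\eps(k)|\to\infty$) and the resonant regime (where $\wt D_\eps(k)$ has a nonzero finite limit) uniformly in~$\eps$, so no separate treatment of the two cases is needed here.
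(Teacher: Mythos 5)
Your argument is correct and follows the same route as the paper: decompose $U_\eps\wt R_\eps(k)W_\eps$ into the two Volterra-type pieces, invoke Proposition~\ref{pro:prox.T}, and verify $\sup_x K_\eps(x)\to0$ by splitting $\bR$ into $x\ge x_\eps$, $x\le -x_\eps$, and $|x|<x_\eps$, using Lemma~\ref{lem:Jost.fpm}(i) together with the explicit exponential form of $\wt f_\pm$ off the scaling interval and the decay $\|u_\eps\|\to0$ from~\eqref{eq:norm-ueps}. One small wording caution: in the middle region the phrase ``Lemma~\ref{lem:Jost.fpm}(i) applies to both factors'' should not be read as bounding both $\wt D_\eps^{-1}\wt f_-$ and $\wt D_\eps^{-1}\wt f_+$, which would introduce a spurious factor $|\wt D_\eps(k)|^2$ (divergent in the non-resonant regime); with your declared split $\psi_-(t)=u_\eps(t)\wt f_-(t,\eps,k)/\wt D_\eps(k)$, $\psi_+(t)=u_\eps(t)\wt f_+(t,\eps,k)$, the lemma is used only for $\psi_-$, while $\psi_+$ is handled by $\wt f_+(t,\eps,k)=\rme^{\rmi kt}$ for $t>x_\eps$ — this gives the same clean bound $K_\eps(x)=O(\|u_\eps\|^4)$ and slightly streamlines the paper's middle-region step, which instead leaves $|\wt D_\eps(k)|^{-2}$ explicit.
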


\subsection{The operators $\wt R_\eps(k) W_\eps$ and $U_\eps \wt R_\eps(k)$} We need to show that both
$\sup_{x\in\bR}K_\eps^{(1)}(x)$ and $\sup_{x\in\bR}K_\eps^{(2)}(x)$ vanish as $\eps\to0$, with
\[
    K_\eps^{(1)}(x)
        := \frac1{\wt D^2_\eps(k)}\int_{-\infty}^x |u_\eps(t)\wt f_-(t,\eps,k)|^2 \,\rmd t
                                \cdot \int_x^\infty |\wt f_+(t,\eps,k)|^2\,\rmd t
\]
and
\[
    K_\eps^{(2)}(x)
        := \frac1{\wt D^2_\eps(k)}\int_{-\infty}^x |\wt f_-(t,\eps,k)|^2 \,\rmd t
                                \cdot \int_x^\infty |u_\eps(t)\wt f_+(t,\eps,k)|^2\,\rmd t.
\]
Since both quantities can be estimated in a similar way, only the first one will be treated in detail.

For $x\ge x_\eps$ we have
\[
    \int_x^\infty |\wt f_+(t,\eps,k)|^2\,\rmd t = \frac1{2|\myIm k|}\,|\rme^{2\rmi k x}|,
\]
which together with~\eqref{eq:uf-} results in
\[
    \sup_{x\ge x_\eps}K^{(1)}_\eps(x) \le \frac{K_1^2}{2|\myIm k|}\,\|u_\eps\|^2.
\]
For $x<x_\eps$ we use Lemma~\ref{lem:Jost.fpm}(ii) and the inequality
\[
    \int_{-\infty}^x |u_\eps(t) \wt f_-(t,\eps,k)|^2\,\rmd t
            \le |\rme^{-2\rmi kx}|\|u_\eps\|^2
\]
to conclude that
\[
    \sup_{x<x_\eps} K^{(1)}_\eps(x) \le  K_2 \|u_\eps\|^2.
\]

Combining the above estimates with~\eqref{eq:norm-ueps}, we arrive at the following conclusion.

\begin{lem}\label{lem:UR+RWnorm}
Under the standing assumptions,
\[
    \lim_{\eps\to0}\Bigl(\|U_\eps \wt R_\eps(k)\|+\|\wt R_\eps(k)W_\eps(k)\|\Bigr) =0.
\]
\end{lem}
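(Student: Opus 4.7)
The plan is to reduce $\|\wt R_\eps(k)W_\eps\|$ and $\|U_\eps\wt R_\eps(k)\|$ to the two suprema $\sup_x K^{(1)}_\eps(x)$ and $\sup_x K^{(2)}_\eps(x)$ via Proposition~\ref{pro:prox.T}, establish the needed bound on $\sup_x K^{(2)}_\eps(x)$ by mirroring the argument already given for $K^{(1)}_\eps$, and conclude from $\|u_\eps\|\to 0$ in~\eqref{eq:norm-ueps}. First, I would write each of the two operators as a sum of two integral operators of the types $T_\pm$ of Proposition~\ref{pro:prox.T}: the kernels are products of $\wt f_\pm(\,\cdot\,,\eps,k)/\wt D_\eps(k)$ with either $u_\eps$ or $w_\eps$, and the identity $|w_\eps|=u_\eps$ ensures that the four resulting constants $K^2$ coincide, pairwise, with $\sup_x K^{(1)}_\eps(x)$ and $\sup_x K^{(2)}_\eps(x)$. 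Thus
\[
 \|\wt R_\eps(k)W_\eps\|+\|U_\eps\wt R_\eps(k)\|
 \le C\Bigl(\sqrt{\sup_x K^{(1)}_\eps(x)}+\sqrt{\sup_x K^{(2)}_\eps(x)}\Bigr)
\]
for an absolute constant $C$.

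Next, the estimate $\sup_x K^{(1)}_\eps(x)\le C_1\|u_\eps\|^2$ has just been derived in the text, so only $K^{(2)}_\eps$ remains, and I would treat it symmetrically by splitting at $x=-x_\eps$. For $x\le -x_\eps$, the explicit form $\wt f_-(t,\eps,k)=\rme^{-\rmi kt}$ on $(-\infty,-x_\eps)$ gives $\int_{-\infty}^x|\wt f_-(t,\eps,k)|^2\,\rmd t = |\rme^{-2\rmi kx}|/(2|\myIm k|)$, while Lemma~\ref{lem:Jost.fpm}(i)—applicable on the entire support of $u_\eps$, which is contained in $\{|t|>x_\eps\}$—combined with the inequality $|\rme^{\rmi kt}|^2\le|\rme^{2\rmi kx}|$ valid for $t\ge x$ (since $\myIm k>0$) bounds the second factor by $K_1^2|\wt D_\eps(k)|^2|\rme^{2\rmi kx}|\|u_\eps\|^2$; the two $x$-dependent exponentials cancel against the $|\wt D_\eps(k)|^{-2}$ in the definition of $K^{(2)}_\eps$ to leave a multiple of $\|u_\eps\|^2$. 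For $x>-x_\eps$, I would instead invoke Lemma~\ref{lem:Jost.fpm}(ii) for $\wt f_-$ and keep the same crude bound on $|u_\eps\wt f_+|^2$; the exponentials again cancel, yielding $K^{(2)}_\eps(x)\le C_2\|u_\eps\|^2$ with $C_2$ independent of $\eps$. Combining both subregions gives $\sup_x K^{(2)}_\eps(x)\le C'\|u_\eps\|^2$, and~\eqref{eq:norm-ueps} completes the proof.

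The only step requiring any care is the cancellation of the $x$-dependent exponentials in the estimate of $K^{(2)}_\eps(x)$ on $x\le -x_\eps$: it relies on using the monotonicity of $|\rme^{\rmi kt}|$ in $t$ (guaranteed by $\myIm k>0$) to pull $|\rme^{2\rmi kx}|$ outside $\int_x^\infty u_\eps(t)^2|\rme^{\rmi kt}|^2\,\rmd t$, so that the factor matches with $|\rme^{-2\rmi kx}|$ coming from the explicit evaluation of $\int_{-\infty}^x|\wt f_-|^2\,\rmd t$. Beyond this bookkeeping, the argument is a direct parallel of the $K^{(1)}_\eps$ case already in the text and presents no conceptual obstacle.
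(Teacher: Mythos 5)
Your overall strategy mirrors the paper exactly (Proposition~\ref{pro:prox.T}, reduction to $\sup_x K^{(1)}_\eps$ and $\sup_x K^{(2)}_\eps$, split at $\pm x_\eps$), and your treatment of $K^{(2)}_\eps$ on $x\le -x_\eps$ is correct. However, the estimate you propose on the complementary region $x>-x_\eps$ has a genuine gap. There you invoke Lemma~\ref{lem:Jost.fpm}(ii) for $\wt f_-$, which gives $\displaystyle|\wt D_\eps(k)|^{-2}\int_{-\infty}^x|\wt f_-|^2\,\rmd t\le K_2|\rme^{-2\rmi kx}|$, and simultaneously ``keep the same crude bound'' $\int_x^\infty|u_\eps\wt f_+|^2\,\rmd t\le K_1^2|\wt D_\eps(k)|^2|\rme^{2\rmi kx}|\|u_\eps\|^2$, which came from Lemma~\ref{lem:Jost.fpm}(i). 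The single $|\wt D_\eps(k)|^{-2}$ present in the definition of $K^{(2)}_\eps$ has already been used by part~(ii), so the factor $|\wt D_\eps(k)|^2$ from part~(i) survives and your actual bound is $K^{(2)}_\eps(x)\le K_1^2K_2\,|\wt D_\eps(k)|^2\|u_\eps\|^2$, not $C_2\|u_\eps\|^2$. This is fatal in the non-resonant case: there $\wt D_\eps(k)=-2\rmi k\,a_\eps^+$ and $\eps a_\eps^+\to-D(0)\ne0$, so $|\wt D_\eps(k)|\sim 2|k||D(0)|/\eps\to\infty$, and the product $|\wt D_\eps(k)|^2\|u_\eps\|^2$ need not tend to zero. (In the resonant case $\wt D_\eps(k)$ stays bounded, so the error is harmless there.)

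The fix is to not use Lemma~\ref{lem:Jost.fpm}(i) for the $u_\eps\wt f_+$ factor on $x>-x_\eps$; this is precisely the pattern the paper follows for $K^{(1)}_\eps$, where in each half of the split exactly one factor is handled by the normalized lemma and the other by the explicit exponential formula. For $x>-x_\eps$, the support of $u_\eps$ inside $[x,\infty)$ is contained in $[x_\eps,\infty)$, where $\wt f_+(t,\eps,k)=\rme^{\rmi kt}$ exactly; hence
\[
    \int_x^\infty |u_\eps(t)\wt f_+(t,\eps,k)|^2\,\rmd t
        \le |\rme^{2\rmi kx}|\,\|u_\eps\|^2
\]
with no $|\wt D_\eps(k)|^2$ factor, and together with Lemma~\ref{lem:Jost.fpm}(ii) this gives $K^{(2)}_\eps(x)\le K_2\|u_\eps\|^2$, matching the paper's bound for $K^{(1)}_\eps$ and completing the argument.
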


\begin{proof}[Proof of Theorem~\ref{thm:res-proximity}] {It suffices to note that Lemmata~\ref{lem:URW-norm} and \ref{lem:UR+RWnorm} justify equation~\eqref{eq:prox.res} and, in turn, establish the convergence in~\eqref{eq:prox.conv}.}
\end{proof}

The main result of the paper is now easy to justify:

\begin{proof}[Proof of Theorem~\ref{thm:main}]
Clearly, the statements of Theorems~\ref{thm:conv-tildeS} and \ref{thm:res-proximity} immediately yield the norm resolvent convergence, as $\eps\to0$, of the family of Schr\"odinger operators~$S_\eps$ given by~\eqref{eq:intr.Seps} to the limiting operator $S_0$.
\end{proof}

\medskip

\emph{Acknowledgements.} The authors are grateful for to S.~Albeverio and C.~Cacciapuoti for bringing to their attention the papers~\cite{AlbeverioCacciapuotiFinco:2007, CacciapuotiExner:2007, Seba:1985} and stimulating discussions. They also thank the anonymous referee for careful reading of the manuscript and valuable remarks and suggestions. The second author acknowledges support from the Isaac Newton Institute for Mathematical Sciences at the University of Cambridge for participation in the programme \emph{``Inverse Problems''}, during which part of this work was done.

\end{document}